\numberwithin{equation}{section}
\theoremstyle{definition}
\newtheorem{alg}{Algorithm}[section]
\newtheorem{dfn}[alg]{Definition}
\theoremstyle{plain}
\newtheorem{thm}[alg]{Theorem}
\theoremstyle{remark}
\newtheorem{rem}[alg]{Remark}
\begin{document}
\title[Numerical conformal mapping based on DSM]{Bidirectional numerical conformal mapping based on the dipole simulation method}
\author{Koya Sakakibara}
\address[K.~Sakakibara]{Graduate School of Science, Kyoto University, Kitashirakawa Oiwake-cho, Sakyo-ku, Kyoto 606-8502, Japan; RIKEN iTHEMS, 2-1 Hirosawa, Wako-shi, Saitama 351-0198, Japan}
\email{ksakaki@math.kyoto-u.ac.jp}
\keywords{conformal mapping, the method of fundamental solutions, the dipole simulation method, the complex dipole simulation method, error estimate}
\subjclass[2010]{30C30, 65E05, 65N80, 65N35, 65N12, 35J05, 35J08}
\begin{abstract}
	Many authors have studied the numerical computation of conformal mappings (numerical conformal mapping), and there are nowadays several efficient numerical schemes.
	Among them, Amano's method offers a straightforward numerical procedure for computing conformal mappings based on the method of fundamental solutions, and it has been applied to several regions with great success.
	However, there are some difficulties in constructing a suitable conjugate harmonic function; therefore, it is required a little of craftmanship.
	In this paper, we construct another numerical scheme for computing conformal mappings based on the dipole simulation method and give mathematical theorems on the approximation error and the arrangements of the singular points.
	Several numerical results are also presented in order to ensure the effectiveness of our proposed method.
\end{abstract}
\maketitle

\section{Introduction}
\label{sec:introduction}

As is well known, a conformal mapping is a fundamental tool in complex analysis.
In some individual cases, we know the explicit formulae of conformal mappings.
However, in most cases, it is hard to obtain explicit formulae; therefore, numerical computation plays essential roles.
In \cite{MR0207240}, Symm developed an integral equation method for computing conformal mappings, in which we have to solve singular Fredholm integral equations of the first kind.
Following this pioneering work, several researchers have developed numerical methods to solve them effectively \cite{MR0301176,MR0396926,MR545882,MR661073,MR615896,MR712114,MR829032,MR829034}.
Aside from these results, Amano has developed novel methods, which we call Amano's method, for computing conformal mappings based on the method of fundamental solutions, in which we do not need to solve any integral equations.
Note that recently Nasser has developed another efficient numerical method which is based on a uniquely solvable boundary integral equation with the generalized Neumann kernel \cite{MR2491542,MR2805493,MR3071407}.
We below explain Amano's method in detail because our proposed numerical scheme is based on it.

\subsection{Amano's method}

We here briefly review Amano's method for bidirectional conformal mapping since it will be the basis of our method.
The term ``bidirectional'' means that we compute a conformal mapping $f$ in a forward direction, from the problem region onto the corresponding canonical region, together with the one $f_*$ in a backward direction, from the canonical region onto the original problem region.
Note that $f_*$ is nothing but the inverse of $f$.
For the sake of simplicity, we consider the case where the problem region $\Omega$ is a Jordan region.
The corresponding canonical region is the unit disk $D_1$ in this case, where $D_\rho$ represents the open disk with radius $\rho$ having the origin as its center: $D_\rho:=\{z\in\mathbb{C}\mid|z|<\rho\}$.
The following explanation is based on the paper \cite{MR1170484}.

Concerning a forward conformal mapping $f$, let us consider the case that $f$ satisfies the following normalizing conditions:
\begin{equation}
	f(z_0)=0,
	\quad
	f'(z_0)>0,
\end{equation}
where $z_0$ is a base point arbitrarily chosen from $\Omega$.
Then we express $f$ in the following form:
\begin{equation}
	f(z)=(z-z_0)\exp[g(z)+\mathrm{i}h(z)].
\end{equation}
$g$ is a harmonic function in $\Omega$ and $h$ is a conjugate harmonic function of $g$.
Since $\partial\Omega$ is a Jordan curve, $f$ can be extended to a homeomorphism from $\overline{\Omega}$ onto $\overline{D}_1$.
Especially, $f(\partial\Omega)=\gamma_1$ holds, where $\gamma_\rho:=\partial D_\rho$ denotes the circle with radius $\rho$ having the origin as its center.
Thus, a harmonic function $g$ can be characterized as a solution to the following potential problem:
\begin{equation}
	\begin{dcases*}
		\triangle g=0&in $\Omega$,\\
		g(z)=-\log|z-z_0|&on $\partial\Omega$.
	\end{dcases*}
	\label{eq:potential_problem_g}
\end{equation}
Therefore, if we can solve the above potential problem for $g$ and can construct its conjugate harmonic function $h$, we can obtain $f$.
In order to do that, the method of fundamental solutions (MFS for short) is applied in Amano's method.

The MFS is a meshfree numerical solver for linear and homogeneous partial differential equations, and its concept is straightforward.
We first choose the singular points $\{\zeta_k\}_{k=1}^{N}\subset\mathbb{C}\setminus\overline{\Omega}$ and the collocation points $\{z_j\}_{j=1}^{N}\subset\partial\Omega$ ``suitably''.
Then we seek an approximate solution $g^{(N)}$ for the problem \eqref{eq:potential_problem_g} of the form
\begin{equation}
	g^{(N)}(z)=\sum_{k=1}^{N}Q_k\log|z-\zeta_k|,
\end{equation}
and coefficients $\{Q_k\}_{k=1}^{N}$ are determined so that they satisfy the following collocation equations:
\begin{equation}
	g^{(N)}(z_j)=-\log|z_j-z_0|
	\quad
	(j=1,2,\ldots,N).
	\label{eq:collocation_equations_g}
\end{equation}
Solving this collocation equations, we obtain $g^{(N)}$, and in principle its conjugate harmonic function $h^{(N)}$ can be constructed as follows:
\begin{equation}
	h^{(N)}(z)=\tilde{h}^{(N)}(z)-\tilde{h}^{(N)}(z_0),
	\quad
	\tilde{h}^{(N)}(z)=\sum_{k=1}^{N}Q_k\arg(z-\zeta_k).
\end{equation}
Note that we have to choose the suitable branch of the argument function, and its practical way has been studied by Amano's group (see for instance \cite{MR1310252,MR1614319,MR2931407}).
We do not explain their method in detail but emphasize that there exists a possibility to develop more straightforward methods than Amano's method.
Anyway, using $g^{(N)}$ and $h^{(N)}$, an approximation $f^{(N)}$ of $f$ can be obtained as
\begin{equation}
	f^{(N)}(z)
	=
	(z-z_0)\exp[g^{(N)}(z)+\mathrm{i}h^{(N)}(z)].
	\label{eq:forward_numerical_conformal_mapping}
\end{equation}

In order to compute a backward conformal mapping $f_*$, we adopt the same strategy for computing a forward conformal mapping $f$.
Namely, we expres $f_*$ as 
\begin{equation}
	f_*(w)=z_0+w\exp[g_*(w)+\mathrm{i}h_*(w)],
\end{equation}
where $g_*$ is a harmonic function in the unit disk $D_1$, and $h_*$ is a conjugate harmonic function of $g_*$.
Using a forward conformal mapping $w=f(z)$, $g_*$ can be characterized as the solution to the following potential problem:
\begin{equation}
	\begin{dcases*}
		\triangle g_*=0&in $D_1$,\\
		g_*(w)=\log|z-z_0|-\log|w|&for $w=f(z)$, $z\in\gamma_1$.
	\end{dcases*}
	\label{eq:backward_potential_problem_g}
\end{equation}
Note that the term $\log|w|$ in the boundary condition theoretically vanishes since $w=f(z)$ is a point on the unit circle.
However, we do not eliminate it since we will consider the case where $\Omega$ is a multiply-connected region later.
Solving the potential problem \eqref{eq:backward_potential_problem_g}, we obtain $g_*$, and subsequently $h_*$ and $f_*$.
In Amano's method, an approximation $g_*^{(N)}$ of $g_*$ is also constructed by the MFS.
Since we need information on the forward conformal mapping in the boundary condition, the corresponding collocation equations become as follows:
\begin{equation}
	g_*^{(N)}(f^{(N)}(z_j))=\log|z_j-z_0|-\log|f^{(N)}(z_j)|
	\quad
	(j=0,1,\ldots,N-1).
\end{equation}
We here again emphasize that the backward conformal mapping $f_*$ is the inverse of the forward conformal mapping $f$.
Therefore, if we can apply some simple approximation technique for holomorphic functions, then we would construct more straightforward methods for computing backward conformal mappings.

\subsection{Contribution of this paper}

We here summarize the concept of this paper.

\subsubsection*{New numerical scheme}

The critical point of Amano's method is to approximate harmonic functions by the MFS, which yields problem on discontinuities of conjugate harmonic function.
An approximate solution by the MFS can be regarded as a discretization of the single-layer potential representation of the solution to the potential problem.
It is well known in the field of the potential theory that single-layer potential representation works well for Neumann boundary value problems, not for Dirichlet boundary value problems, and that double-layer potential representation is suitable for Dirichlet boundary value problems (see for instance \cite{MR1357411}).
Therefore, it is natural to consider that an approximation of double-layer potential representation offers more suitable approximations of harmonic functions, and it is none other than the dipole simulation method (DSM for short), which was first proposed in \cite{MR991024} and some mathematical analysis has been done so far \cite{MR991024,MR3576615}.
Our approach does not yield any problem concerning discontinuities in computing conjugate harmonic function.
See for details $\S$\ref{subsec:DSM} and $\S$\ref{subsec:ForwardDSM}.
Note that the usage of the DSM in computing forward conformal mapping is not new, and it has been used in \cite{ogata2015dipole} to solve potential problems.
However, they did not consider any background on the potential theory and just gave a few results of numerical experiments.
In this paper, we give a theoretical error estimate and clarify some mathematical structure on the arrangements of the singular points (see $\S$\ref{subsec:ErrorEstimate} and $\S$\ref{subsec:Arrangements} for details).

In computing backward conformal mapping, we do not solve the potential problem but directly approximate holomorphic function $f_*$ by the complex dipole simulation method (CDSM for short), which was proposed in \cite{MR3448858}.
The CDSM offers an approximation of a holomorphic function by rational function, and it can be regarded as a discretization of Cauchy integral representation of a holomorphic function.
We directly approximate backward conformal mappings by the CDSM, which offers us a more straightforward expression of backward numerical conformal mapping.

\subsubsection*{Error estimate}

As we mentioned in the above, mathematical analysis of the DSM, which is applied to solve the potential problem \eqref{eq:potential_problem_g}, has been done in \cite{MR991024,MR3576615}, and they state that approximate solution by the DSM uniquely exists and approximation error decays exponentially for the number of points.
In order to estimate the accuracy of our method, it is required to give an error estimate for conjugate harmonic function, which has not been studied in previous studies in numerical conformal mapping by Amano's method.
We give a theoretical estimate of our method for computing conjugate harmonic function by using an argument of Hilbert transform, which states that approximation order of conjugate harmonic function (imaginary part) $h$ is governed by that of harmonic function (real part) $g$.

\subsection{Organization of this paper}

The organizations of this paper are as follows.
In $\S$\ref{sec:SimplyConnected}, we derive the DSM from a double-layer potential representation of a harmonic function and construct a numerical scheme for computing forward numerical conformal mapping.
We then explain the notion of the CDSM and develop a numerical scheme for computing backward numerical conformal mapping.
After that, we prove an error estimate for the conjugate harmonic function.
In $\S$\ref{sec:MultiplyConnected}, we extend our result to the case of the multiply-connected region and explain the effectiveness of our numerical scheme by comparing with Amano's method.
In $\S$\ref{sec:numerical_experiments}, we first establish some mathematical structure on the arrangements of the singular points by a straightforward argument.
We then show several numerical results which exemplify the effectiveness of our method.
In $\S$\ref{sec:ConcludingRemarks}, we summarize this paper and give some concluding remarks.

\section{Numerical conformal mapping for simply-connected region}
\label{sec:SimplyConnected}

\subsection{The dipole simulation method}
\label{subsec:DSM}

We first introduce the DSM from the viewpoint of the potential theory.
Let $\Omega$ be a bounded region in $\mathbb{R}^2$ with smooth boundary $\partial\Omega$.
We then consider the following potential problem:
\begin{equation}
	\begin{dcases*}
		\triangle u=0&in $\Omega$,\\
		u=f&on $\partial\Omega$,
	\end{dcases*}
	\label{eq:potential_problem}
\end{equation}
where $f$ is a given data on $\partial\Omega$.
The MFS offers an approximate solution $u^{(N)}$ for the above problem by a linear combination of logarithmic potentials with singularities being located outside $\Omega$:
\begin{equation}
	u^{(N)}(z)
	=
	\sum_{k=1}^{N}Q_k\log|z-\zeta_k|.
\end{equation}
This approximate solution can be regarded as a discretization of the following single-layer potential representation of the exact solution $u$:
\begin{equation}
	u(z)=\int_{\Gamma}Q(\zeta)\log|z-\zeta|\,\mathrm{d}s(\zeta),
\end{equation}
where $\Gamma$ is some smooth Jordan curve enclosing $\overline{\Omega}$, $Q$ is some appropriate function defined on $\Gamma$, and $\mathrm{d}s$ denotes the line element on $\Gamma$.
Indeed, if we substitute a function
\begin{equation}
	Q^{(N)}(\zeta)=\sum_{k=1}^{N}Q_k\delta(\zeta-\zeta_k)
	\label{eq:QN}
\end{equation}
into $Q$, then we formally obtain
\begin{align}
	\int_{\Gamma}Q^{(N)}(\zeta)\log|z-\zeta|\,\mathrm{d}s(\zeta)
	&=
	\sum_{k=1}^{N}Q_k\int_{\Gamma}\delta(\zeta-\zeta_k)\log|z-\zeta|\,\mathrm{d}s(\zeta)\\
	&=
	\sum_{k=1}^{N}Q_k\log|z-\zeta_k|,
\end{align}
which is nothing but an approximate solution by the MFS.
However, it is usually in the potential theory that the solution $u$ is represented by the double-layer potential (see for instance \cite{MR1357411}):
\begin{equation}
	u(z)=\int_{\Gamma}Q(\zeta)\partial_{n(\zeta)}\log|z-\zeta|\,\mathrm{d}s(\zeta),
\end{equation}
where $\partial_{n(\zeta)}$ denotes the derivative in the (unit) outward normal direction $n(\zeta)\in\mathbb{C}$ at $\zeta\in\Gamma$.
Then it is natural to expect that we can obtain another ``good'' approximate solution by substituting $Q^{(N)}$ defined in \eqref{eq:QN} into $Q$.
We obtain that
\begin{align}
	\int_{\Gamma}Q^{(N)}(\zeta)\partial_{n(\zeta)}\log|z-\zeta|\,\mathrm{d}s(\zeta)
	&=
	\sum_{k=1}^{N}Q_k\int_{\Gamma}\delta(\zeta-\zeta_k)\partial_{n(\zeta)}\log|z-\zeta|\,\mathrm{d}s(\zeta)\\
	&=
	\sum_{k=1}^{N}Q_k\partial_{n(\zeta_k)}\log|z-\zeta_k|.
\end{align}
Since the normal derivatives in the above expression can be computed as
\begin{equation}
	\partial_{n(\zeta_k)}\log|z-\zeta_k|
	=
	\Re\left(\frac{n(\zeta_k)}{z-\zeta_k}\right),
\end{equation}
we finally obtain the following approximate solution:
\begin{equation}
	u^{(N)}(z)
	=
	\sum_{k=1}^{N}Q_k\Re\left(\frac{n_k}{z-\zeta_k}\right),
	\label{eq:DSM_approximate_solution}
\end{equation}
where $n_k:=n(\zeta_k)$ for $k=1,2,\ldots,N$, which we call the dipole moments.
This is nothing but an approximate solution by the DSM.
More precisely, an algorithm of the DSM can be described as follows:

\begin{alg}
	\begin{enumerate}
		\renewcommand{\labelenumi}{(\Roman{enumi})}
		\item Choose the singular points $\{\zeta_k\}_{k=1}^{N}$, the collocation points $\{z_j\}_{j=1}^{N}$ and the dipole moments $\{n_k\}_{k=1}^{N}$ ``suitably''.
		\item Seek an approximate solution $u^{(N)}$ for the potential problem \eqref{eq:potential_problem} of the form \eqref{eq:DSM_approximate_solution} to satisfy the following collocation equations:
			\begin{equation}
				u^{(N)}(z_j)=f(z_j)
				\quad
				(j=1,2,\ldots,N),
			\end{equation}
			which is equivalent to the following linear system:
			\begin{equation}
				G\bm{Q}=\bm{f},
			\end{equation}
			where
			\begin{align}
				&G=\left(\Re\left(\frac{n_k}{z_j-n_k}\right)\right)_{j,k=1,2,\ldots,N}\in\mathbb{R}^{N\times N},\\
				&\bm{Q}=(Q_1,Q_2,\ldots,Q_{N})^{\mathrm{T}}\in\mathbb{R}^N,\\
				&\bm{f}=(f(z_1),f(z_2),\ldots,f(z_{N}))^{\mathrm{T}}\in\mathbb{R}^N.
			\end{align}
	\end{enumerate}
\end{alg}

\subsection{Forward numerical conformal mapping based on the DSM}
\label{subsec:ForwardDSM}

The most important feature of the DSM is that an approximate solution by the DSM can be expressed as a real part of a holomorphic function as follows:
\begin{equation}
	u^{(N)}(z)=\Re\left(\sum_{k=1}^{N}\frac{Q_kn_k}{z-\zeta_k}\right).
	\label{eq:dipole_simulation_method_real_part}
\end{equation}
Therefore one of its conjugate harmonic functions $\tilde{v}^{(N)}$ can be obtained by simply replacing the symbol ``$\Re$" with ``$\Im$'':
\begin{equation}
	\tilde{v}^{(N)}(z)=\Im\left(\sum_{k=1}^{N}\frac{Q_kn_k}{z-\zeta_k}\right).
	\label{eq:conjugate_harmonic_function_DSM}
\end{equation}
This observation leads us to the following algorithm for a numerical conformal mapping from $\Omega$ onto the unit disk $D_1$.

\begin{alg}
	\label{alg:forward_numerical_conformal_mapping}
	\begin{enumerate}
		\renewcommand{\labelenumi}{(\Roman{enumi})}
		\item Choose the singular points $\{\zeta_k\}_{k=1}^{N}$, the collocation points $\{z_j\}_{j=1}^{N}$ and the dipole moments $\{n_k\}_{k=1}^{N}$ ``suitably''.
		\item Seek an approximate solution $g^{(N)}$ for the potential problem \eqref{eq:potential_problem_g} of the form \eqref{eq:DSM_approximate_solution} that satisfies the collocation equations \eqref{eq:collocation_equations_g}.
		\item Construct the conjugate harmonic function $h^{(N)}$ of $g^{(N)}$ as $h^{(N)}=\tilde{h}^{(N)}-\tilde{h}^{(N)}(z_0)$, where $\tilde{h}^{(N)}$ is defined by \eqref{eq:conjugate_harmonic_function_DSM}.
		\item Construct $f^{(N)}$ by \eqref{eq:forward_numerical_conformal_mapping}.
	\end{enumerate}
\end{alg}

We, here again, emphasize that there does not exist any problem on the choice of suitable branch of the argument function.
In this point, we can say that the above algorithm is more straightforward than Amano's method.
In $\S$\ref{sec:numerical_experiments}, we show that the accuracy of our method is almost the same as that of Amano's method.

\subsection{The complex dipole simulation method}

In $\S$\ref{sec:introduction}, we have pointed out that computing the backward conformal mapping $f_*$ is nothing but computing the inverse of the forward conformal mapping $f$.
In order to approximate $f_*$ directly by using boundary correspondence, we apply the CDSM, which is an approximation technique for holomorphic functions by rational functions, which was developed in \cite{MR3448858}.
Its idea is based on the expression \eqref{eq:dipole_simulation_method_real_part} for an approximate solution by the DSM, from which we can expect that the rational function
\begin{equation}
	f^{(N)}(w)=\sum_{k=1}^{N}\frac{Q_k}{w-\xi_k}
	\label{eq:CDSM}
\end{equation}
offers a good approximation for a holomorphic function $f$.
More precisely, an algorithm of the CDSM can be given as follows:

\begin{alg}
	\begin{enumerate}
		\renewcommand{\labelenumi}{(\Roman{enumi})}
		\item Choose the singular points $\{\xi_k\}_{k=1}^{N}$, and the collocation points $\{w_j\}_{j=1}^{N}$ ``suitably''.
		\item Seek an approximation $f^{(N)}$ of $f$ of the form \eqref{eq:CDSM} to satisfy the following collocation equations:
			\begin{equation}
				f^{(N)}(w_j)=f(w_j)
				\quad
				(j=1,2,\ldots,N),
			\end{equation}
			which is equivalent to the following linear system:
			\begin{equation}
				G\bm{Q}=\bm{f},
			\end{equation}
			where
			\begin{align}
				&G=\left(\frac{1}{w_j-\xi_k}\right)_{j,k=1,2,\ldots,N}\in\mathbb{C}^{N\times N},\\
				&\bm{Q}=(Q_1,Q_2,\ldots,Q_{N})^{\mathrm{T}}\in\mathbb{C}^N,\\
				&\bm{f}=(f(z_1),f(z_2),\ldots,f(z_{N}))^{\mathrm{T}}\in\mathbb{C}^N.
			\end{align}
	\end{enumerate}
\end{alg}

\subsection{Backward numerical conformal mapping based on the CDSM}

Using the CDSM, we can directly obtain an approximation $f_*^{(N)}$ of the backward conformal mapping $f_*$ as in the following algorithm.

\begin{alg}
	\label{alg:backward_numerical_conformal_mapping}
	\begin{enumerate}
		\renewcommand{\labelenumi}{(\Roman{enumi})}
		\item Choose the singular points $\{\xi_k\}_{k=1}^{N}$ ``suitably''.
		\item Seek an approximation $f_*^{(N)}$ of $f_*$ of the form \eqref{eq:CDSM} that satisfies the collocation equations:
			\begin{equation}
				f_*^{(N)}(w_j)=z_j,
				\quad
				w_j=f^{(N)}(z_j)
				\quad(j=1,2,\ldots,N),
			\end{equation}
			where $\{z_j\}_{j=1}^{N}$ are the collocation points which are used in Algorithm \ref{alg:forward_numerical_conformal_mapping} to construct $f^{(N)}$.
	\end{enumerate}
\end{alg}

Combining Algorithms \ref{alg:forward_numerical_conformal_mapping} and \ref{alg:backward_numerical_conformal_mapping}, we obtain bidirectional numerical conformal mapping for simply-connected region.

\subsection{Error estimate}
\label{subsec:ErrorEstimate}

Concerning the DSM, it has been proved that if we arrange the singular points, the collocation points and the dipole moments using a peripheral conformal mapping, then an approximation error decays exponentially for $N$ \cite{MR991024,MR3576615}.
As to the CDSM, a similar estimate has been proved in \cite{MR3448858}.
Applying these results, we can immediately show that approximation errors for $g^{(N)}$ and $f_*^{(N)}$ decays exponentially for $N$.
However, these do not tell us the one for $h^{(N)}$, the conjugate harmonic function of $g^{(N)}$.
In this section, we show a simple estimate of an approximation error for $h^{(N)}$.
To this end, we accurately state mathematical results proved in \cite{MR3576615}.

Let $\mathcal{T}$ be a set of finite Fourier series on $S^1=\mathbb{R}/\mathbb{Z}$, that is, $g\in\mathcal{T}$ is represented as
\begin{equation}
	g(\tau)=\sum_{n\in\mathbb{Z}}\hat{g}(n)\mathrm{e}^{2\pi\mathrm{i}n\tau},
	\quad
	\tau\in S^1,
\end{equation}
where $\hat{g}(n)$ are complex numbers of which all but a finite number are zeros.
For $(\epsilon,s)\in(0,+\infty)\times\mathbb{R}$, we define $(\epsilon,s)$-inner product $(\cdot,\cdot)_{\epsilon,s}\colon\mathcal{T}\times\mathcal{T}\rightarrow\mathbb{C}$ and its induced $(\epsilon,s)$-norm $\|\cdot\|_{\epsilon,s}\colon\mathcal{T}\rightarrow\mathbb{R}$ as follows:
\begin{alignat}{2}
	(g,h)_{\epsilon,s}
	&=
	\sum_{n\in\mathbb{Z}}\hat{g}(n)\overline{\hat{h}(n)}\epsilon^{2|n|}\underline{n}^{2s},
	&\quad
	&g,h\in\mathcal{T},\\
	\|g\|_{\epsilon,s}
	&=
	\sqrt{(g,g)_{\epsilon,s}}
	=
	\sqrt{\sum_{n\in\mathbb{Z}}|\hat{g}(n)|^2\epsilon^{2|n|}\underline{n}^{2s}},
	&\quad
	&g\in\mathcal{T},
\end{alignat}
where $\underline{n}=\max\{2\pi|n|,1\}$.
Then, we define a function space $\mathscr{X}_{\epsilon,s}$ as a completion of $\mathcal{T}$ with respect to $(\epsilon,s)$-norm $\|\cdot\|_{\epsilon,s}$, which forms a Hilbert space.
For a smooth Jordan curve $\Gamma$ with an $S^1$-parameterization $\Phi\colon S^1\rightarrow\mathbb{C}$, define a function space $\mathscr{X}_{\epsilon,s}(\Gamma)$ as $\mathscr{X}_{\epsilon,s}(\Gamma):=\{G\colon\Gamma\rightarrow\mathbb{C}\mid G\circ\Phi\in\mathscr{X}_{\epsilon,s}\}$, and the norm $\|G\|_{\mathscr{X}_{\epsilon,s}(\Gamma)}$ for $G\in\mathscr{X}_{\epsilon,s}(\Gamma)$ is given by $\|G\|_{\mathscr{X}_{\epsilon,s}(\Gamma)}:=\|G\circ\Phi\|_{\epsilon,s}$.
Moreover, we define a relation $(\epsilon_1,s_1)>(\epsilon_2,s_2)$ on $(0,+\infty)\times\mathbb{R}$ as $(\epsilon_1>\epsilon_2)\lor(\epsilon_1=\epsilon_2\land s_1>s_2)$.
For more details on these function spaces, see \cite{MR717692}.

Concerning arrangements of the singular points, the collocation points, and the dipole moments, we use a peripheral conformal mapping of $\partial\Omega$, which is introduced first in \cite{MR1362387,katsurada1998mathematical} and has been developed in \cite{MR3576615}.

\begin{dfn}
	For a Jordan curve $\Gamma$ in the plane and a positive constant $\rho$, a mapping $\Psi$ from a neighborhood of the circle $\gamma_\rho$ to $\mathbb{C}$ is called a \textit{peripheral conformal mapping} of $\Gamma$ with reference radius $\rho$ if the following two conditions are satisfied:
	\begin{enumerate}
		\item $\Psi$ maps $\gamma_\rho$ onto $\Gamma$;
		\item $\Psi\colon\overline{\mathcal{A}}_{\kappa^{-1}\rho,\kappa\rho}\rightarrow\mathbb{C}$ is a conformal mapping with some $\kappa>1$, where $\mathcal{A}_{\rho_1,\rho_2}$ represents an annular region $\{z\in\mathbb{C}\mid\rho_1<|z|<\rho_2\}$ with $\rho_2>\rho_1>0$.
	\end{enumerate}
\end{dfn}

\begin{rem}
	\label{rem:peripheral}
	The existence of peripheral conformal mapping is pointed out in \cite[Remark 3.1]{katsurada1998mathematical} without proofs, and its approximately construction based on FFT is explained in \cite{MR1362387}.
\end{rem}

Using these notions, we can state the following theorem proved in \cite{MR3576615}.

\begin{thm}[{\cite[Theorem 4.1]{MR3576615}}]
	\label{thm:DSM}
	Let us consider the following Dirichlet boundary value problem:
	\begin{equation}
		\begin{dcases*}
			\triangle u=0&in $\Omega$,\\
			u=b&on $\partial\Omega$.
		\end{dcases*}
		\label{eq:test_potential_problem}
	\end{equation}
	Suppose that $\partial\Omega$ is analytic, $b\in\mathscr{X}_{\mu,\sigma}(\partial\Omega)$ for some $(\mu,\sigma)>(1,1/2)$, $(\delta,t)\in(0,+\infty)\times\mathbb{R}$ satisfies 
	\begin{align}
		&1\le\delta\le\min\left\{\mu,\left(\frac{R}\rho\right)^2\right\},\\
		&\text{if $\delta=1$ then $t>1/2$ and $s<t$; if $\delta=\kappa$ then $t<-1/2$; if $\delta=\mu$ then $t\le\sigma$}.
	\end{align}
	Let $\Psi$ be a peripheral conformal mapping of $\partial\Omega$ with reference radius $\rho>0$, and arrange the singular points $\{\zeta_k\}_{k=1}^{N}$, the collocation points $\{z_j\}_{j=1}^{N}$, and the dipole moments $\{n_k\}_{k=1}^{N}$ as 
	\begin{align}
		\zeta_k
		&=
		\Psi(R\omega^k),
		\quad
		z_k
		=
		\Psi(\rho\omega^k),
		\quad
		n_k=\frac{\omega^k\Psi'(R\omega^k)}{|\Psi'(R\omega^k)|},
		\quad
		k=1,2,\ldots,N,
	\end{align}
	where $R\in(\rho,\sqrt{\kappa}\rho)$.
	Then, for sufficiently large $N$, the following holds:
	\begin{enumerate}
		\item There uniquely exists an approximate solution $u^{(N)}$ of the form \eqref{eq:DSM_approximate_solution} satisfying the collocation equations:
			\begin{equation}
				u^{(N)}(z_j)=b(z_j),
				\quad
				j=1,2,\ldots,N.
			\end{equation}
		\item There exist a positive constant $C=C(\epsilon,s,\delta,t,\rho,R)$ and a real constant $P=P(\epsilon,s,\delta,t)$ such that the error estimate
			\begin{equation}
				\|u-u^{(N)}\|_{H^s(\partial\Omega)}
				\le
				CN^P\frac{1}{\delta^{N/2}}\|b\|_{\mathscr{X}_{\delta,t}(\partial\Omega)}.
			\end{equation}
			Namely, an approximation error decays exponentially with respect to $N$ when the boundary data $b$ is analytic, but it decays algebraically with respect to $N$ when $b$ is not analytic but in $H^\sigma(\partial\Omega)$ with $\sigma>1/2$ (especially, $b$ is H\"older continuous).
	\end{enumerate}
\end{thm}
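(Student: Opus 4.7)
The plan is to pull the entire setup back to the reference circle $\gamma_\rho$ via the peripheral conformal mapping $\Psi$, exploit the resulting rotational symmetry to diagonalize the collocation system by discrete Fourier transform, and read off the error estimate from the decay rate of Fourier coefficients in the weighted space $\mathscr{X}_{\delta,t}$.

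First I would normalize the geometry. Writing $w=\rho\omega^j$ and $w_*=R\omega^k$, the dipole kernel evaluated at a collocation point is
\begin{equation}
\Re\biggl(\frac{n_k}{\Psi(w)-\Psi(w_*)}\biggr),
\end{equation}
and the specific choice $n_k=\omega^k\Psi'(w_*)/|\Psi'(w_*)|$ is engineered so that the Taylor expansion of $\Psi(w)-\Psi(w_*)$ about $w_*$ cancels the factor $\Psi'(w_*)$, leaving a leading term that reproduces exactly the circular kernel $\Re(\omega^k/(w-R\omega^k))$ of the disk, up to the positive factor $1/|\Psi'(w_*)|$. This decomposes the collocation matrix $G$ into a circulant principal part plus a smoothing remainder; the latter is analytic in a strictly larger annulus because $\Psi$ is conformal on $\overline{\mathcal{A}}_{\kappa^{-1}\rho,\kappa\rho}$.

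Next I would analyze the disk case directly. There $G$ is exactly circulant, and its eigenvalues are obtained by applying the discrete Fourier transform to a single row, which reduces to summing a geometric series in $\rho/R<1$. The resulting Fourier symbols are bounded away from zero uniformly in $N$, which gives unique solvability, and the approximate solution for a general boundary datum $\tilde b\in\mathscr{X}_{\delta,t}$ can be written down mode by mode. The $H^s$-error then splits into a truncation part and an aliasing part: both involve Fourier coefficients $\hat b(n)$ with $|n|\ge N/2$, each of which is at most $\delta^{-|n|}\|\tilde b\|_{\delta,t}$ by the very definition of the $\mathscr{X}_{\delta,t}$-norm. The worst such coefficient contributes of order $\delta^{-N/2}$, up to an algebraic factor in $N$ arising from summing the weighted tails against $\underline n^{2s}$.

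Finally I would pass from the disk to the general analytic boundary by treating the smoothing remainder in the decomposition of $G$ as a perturbation: because its kernel extends holomorphically into a strictly larger annulus than the principal part, its action on Fourier modes decays at a strictly faster geometric rate, so a Neumann-series argument preserves invertibility for large $N$ and transports the bound $CN^P\delta^{-N/2}$ without worsening its exponential rate. The main obstacle, I expect, is precisely this perturbation step: one must show that the full DSM collocation operator is close to its circulant principal part in a norm tight enough to preserve the sharp constant $\delta^{-N/2}$, which forces a careful choice of weighted operator norm on $\mathscr{X}_{\delta,t}$ together with quantitative bounds on $\Psi'$ and $1/\Psi'$ throughout $\mathcal{A}_{\kappa^{-1}\rho,\kappa\rho}$. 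Verifying these bounds and tracking the admissible range of $(\delta,t)$ imposed by the three corner cases $\delta\in\{1,\kappa,\mu\}$ is where I expect the detailed analysis of \cite{MR3576615} to carry the weight.
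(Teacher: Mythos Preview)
The paper does not prove this theorem at all: it is quoted verbatim as \cite[Theorem~4.1]{MR3576615} and used as a black box to obtain the estimate for the real part $g$, which is then fed into the paper's own argument for the conjugate harmonic function $h$. So there is no proof in the present paper against which to compare your proposal.

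That said, your outline is exactly the strategy that the cited reference (and the Katsurada--Okamoto line of work it belongs to) employs: transplant to the reference annulus via the peripheral conformal mapping, isolate the circulant principal part of the collocation matrix, diagonalize by DFT to control invertibility and the Fourier-mode error in the disk case, and then handle the general analytic boundary as a compact perturbation via a Neumann-series argument in the $\mathscr{X}_{\epsilon,s}$ scale. Your identification of the delicate step---showing that the remainder after extracting the circulant part is small in an operator norm compatible with the weighted spaces, so that the sharp rate $\delta^{-N/2}$ survives---is accurate; this is precisely where the bulk of the technical work in \cite{MR3576615} lies. One small correction: in the corner conditions you should expect the case $\delta=(R/\rho)^2$ rather than $\delta=\kappa$ to be the relevant upper endpoint (the statement in the paper itself appears to contain a typo here), since $(R/\rho)^2$ is what governs the decay of the dipole kernel's Fourier coefficients.
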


Applying the above theorem to the current situation, we can obtain the error estimate $\|g-g^{(N)}\|_{H^s(\partial\Omega)}$ for approximating the real part $g$.
However, we also have to establish the error estimate $\|h-h^{(N)}\|_{H^s(\partial\Omega)}$ for approximating the imaginary part $h$ in order to derive the approximation error for numerical conformal mapping.
To this end, we introduce the notion of a conjugate periodic function.

\begin{dfn}
	Let $\phi$ be a $1$-periodic function which has an absolutely convergent Fourier series expansion:
	\begin{equation}
		\phi(\tau)=\sum_{n=-\infty}^\infty a_n\mathrm{e}^{2\pi\mathrm{i}n\tau},
		\quad
		\tau\in S^1.
	\end{equation}
	Then, the \textit{conjugate periodic function} $\psi$ of $\phi$ is defined as
	\begin{equation}
		\psi(\tau)
		=
		\mathcal{H}\phi(\tau)
		=
		-\mathrm{i}\sum_{n=-\infty}^\infty\sigma_na_n\mathrm{e}^{2\pi\mathrm{i}n\tau},
		\quad
		\tau\in S^1.
	\end{equation}
	Here, $\mathcal{H}$ represents the Hilbert transform, and $\sigma_n$ denotes the signum function which is defined as 
	\begin{equation}
		\sigma_n=
		\begin{dcases}
			1,&n>0,\\
			0,&n=0,\\
			-1,&n<0.
		\end{dcases}
	\end{equation}
\end{dfn}

Using the notion of a conjugate periodic function, we derive the following error bound for the imaginary part.

\begin{thm}
	There exists a positive constant $C$ such that the following inequality holds:
	\begin{equation}
		\|h-h^{(N)}\|_{H^s(\partial\Omega)}
		\le
		C\|g-g^{(N)}\|_{H^s(\partial\Omega)}.
	\end{equation}
	Namely, the behavior of the approximation error for the imaginary part is governed by that for the real part.
\end{thm}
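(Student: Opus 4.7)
The plan is to pull everything back to the unit disk via a Riemann map, so that the conjugate-harmonic correspondence on the boundary becomes exactly the Hilbert transform on $S^1$, which is bounded on every Sobolev space. Write $G:=g-g^{(N)}$ and $H:=h-h^{(N)}$. Both $g+\mathrm{i}h$ and $g^{(N)}+\mathrm{i}h^{(N)}$ are holomorphic in $\Omega$: the former because $h$ is the harmonic conjugate of $g$ in the factorization $f(z)=(z-z_0)\exp(g+\mathrm{i}h)$, and the latter because \eqref{eq:dipole_simulation_method_real_part}--\eqref{eq:conjugate_harmonic_function_DSM} give $g^{(N)}+\mathrm{i}h^{(N)}=\sum_{k=1}^{N}Q_kn_k/(z-\zeta_k)-\mathrm{i}\tilde h^{(N)}(z_0)$, which is holomorphic in $\Omega$ since the $\zeta_k$ lie outside $\overline\Omega$. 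Hence $F:=G+\mathrm{i}H$ is holomorphic in $\Omega$, and the base-point normalizations $h(z_0)=0$ (forced by $f'(z_0)>0$) and $h^{(N)}(z_0)=0$ (built into Algorithm~\ref{alg:forward_numerical_conformal_mapping}) give $\Im F(z_0)=0$, which will be essential below.

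Let $\Phi\colon\overline{D}_1\to\overline{\Omega}$ be the Riemann map normalized by $\Phi(0)=z_0$ and $\Phi'(0)>0$. Because $\partial\Omega$ is analytic, $\Phi$ extends to a biholomorphism of a neighborhood of $\overline{D}_1$, so $\Phi|_{\gamma_1}\colon S^1\to\partial\Omega$ is a real-analytic diffeomorphism. The composition $\tilde F:=F\circ\Phi$ is holomorphic in $D_1$, analytic up to $\gamma_1$, and satisfies $\Im\tilde F(0)=0$. Expanding $\tilde F(e^{\mathrm{i}\theta})=\sum_{n\ge 0}c_ne^{\mathrm{i}n\theta}$ and separating real and imaginary parts, a direct Fourier-coefficient computation using only positive frequencies of $\tilde F$ and matching with the definition of $\mathcal{H}$ yields
\begin{equation}
\Im\tilde F\big|_{\gamma_1}-\Im\tilde F(0)=\mathcal{H}\bigl(\Re\tilde F\big|_{\gamma_1}\bigr).
\end{equation}
Since $\Im\tilde F(0)=0$, this reduces to $H\circ\Phi|_{\gamma_1}=\mathcal{H}\bigl(G\circ\Phi|_{\gamma_1}\bigr)$.

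The operator $\mathcal{H}$ is bounded on every Sobolev space $H^s(S^1)$ (in fact $\|\mathcal{H}\|_{H^s\to H^s}=1$, since it multiplies each nonzero Fourier mode by a unimodular constant and kills the zeroth mode), so
\begin{equation}
\|H\circ\Phi\|_{H^s(S^1)}=\|\mathcal{H}(G\circ\Phi)\|_{H^s(S^1)}\le\|G\circ\Phi\|_{H^s(S^1)}.
\end{equation}
Because $\Phi|_{\gamma_1}$ is a real-analytic diffeomorphism, pullback by $\Phi$ induces an equivalence of Sobolev norms $\|\,\cdot\,\|_{H^s(\partial\Omega)}\asymp\|\,\cdot\circ\Phi\|_{H^s(S^1)}$ with constants depending only on $\Omega$, $z_0$, and $s$. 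Chaining these estimates yields the desired inequality.

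The only subtle point is the additive-constant bookkeeping in the conjugate-harmonic correspondence: the boundary identity $v=\mathcal{H}u$ holds only after the zeroth Fourier mode of the imaginary part is subtracted, so the whole argument hinges on the normalization $h(z_0)=h^{(N)}(z_0)=0$ to remove that constant. Beyond this, the remaining ingredients—analytic extension of the Riemann map across $\partial\Omega$, the fact that $\mathcal{H}$ is a Fourier multiplier of norm one, and Sobolev-norm equivalence under analytic diffeomorphisms—are standard.
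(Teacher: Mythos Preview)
Your proof is correct and follows the same overall strategy as the paper: pull the problem back to a disk, identify the conjugate-harmonic correspondence on the boundary with the Hilbert transform $\mathcal{H}$, and use that $\mathcal{H}$ is a Fourier multiplier of norm at most one on $H^s$. The difference lies in how the additive constant is handled. The paper pulls back via a \emph{peripheral} conformal mapping (so the base point $z_0$ is not sent to the center), applies $\mathcal{H}$ to $g$ and $g^{(N)}$ separately, and is then left with the constant $\tilde h(z_0)-\tilde h^{(N)}(z_0)$; it bounds this by the maximum principle together with the Sobolev embedding $H^s(\partial\Omega)\hookrightarrow C(\partial\Omega)$, which introduces the constant $C$ and implicitly uses $s>1/2$. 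You instead pull back by the Riemann map normalized so that $\Phi(0)=z_0$; the normalizations $h(z_0)=h^{(N)}(z_0)=0$ then force $\Im\tilde F(0)=0$, so the zeroth-mode term in the Hilbert-transform identity vanishes outright and no embedding step is needed. Your route is therefore slightly cleaner and avoids the restriction $s>1/2$; the trade-off is that it relies on the global Riemann map, whereas the paper's argument uses only the peripheral conformal mapping, a local object near $\partial\Omega$ that is also available in the multiply-connected setting treated later.
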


\begin{proof}
	Since the norm $\|\cdot\|_{\mathscr{X}_{\epsilon,s}(\partial\Omega)}$ is defined through a peripheral conformal mapping, and a potential problem on $\Omega$ can be converted into the one on the disk $D_\rho$, we only consider the case where $\Omega=D_\rho$.
	
	Denote the boundary values of $g$ and $g^{(N)}$ at $z=\rho\mathrm{e}^{2\pi\mathrm{i}\tau}$ as $G(\tau)$ and $G^{(N)}(\tau)$, respectively:
	\begin{equation}
		G(\tau):=g(\rho\mathrm{e}^{2\pi\mathrm{i}\tau}),
		\quad
		G^{(N)}(\tau):=g^{(N)}(\rho\mathrm{e}^{2\pi\mathrm{i}\tau}),
		\quad
		\tau\in S^1.
	\end{equation}
	Then, the approximation error $\|g-g^{(N)}\|_{\mathscr{X}_{\epsilon,s}(\partial\Omega)}$ can be represented by using $G$ and $G^{(N)}$ as follows:
	\begin{align}
		\|g-g^{(N)}\|_{H^s(\partial\Omega)}^2
		&=
		\|G-G^{(N)}\|_{H^s}^2\\
		&=
		|\hat{G}(0)-\hat{G}^{(N)}(0)|^2+\sum_{n\in\mathbb{Z}\setminus\{0\}}|\hat{G}(n)-\hat{G}^{(N)}(n)|^2\underline{n}^{2s}.
	\end{align}
	Defining functions $H$, $\tilde{H}$, $H^{(N)}$, $\tilde{H}^{(N)}$ as
	\begin{align}
		&H(\tau):=h(\rho\mathrm{e}^{2\pi\mathrm{i}\tau}),
		\quad
		\tilde{H}(\tau):=\tilde{h}(\rho\mathrm{e}^{2\pi\mathrm{i}\tau}),\\
		&H^{(N)}(\tau):=h^{(N)}(\rho\mathrm{e}^{2\pi\mathrm{i}\tau}),
		\quad
		\tilde{H}^{(N)}(\tau):=\tilde{h}^{(N)}(\rho\mathrm{e}^{2\pi\mathrm{i}\tau})
	\end{align}
	for $\tau\in S^1$, we have the following relations:
	\begin{align}
		&\tilde{H}(\tau)=\mathcal{H}G(\tau),
		\quad
		\tilde{H}^{(N)}(\tau)=\mathcal{H}G^{(N)}(\tau),\\
		&H(\tau)=\tilde{H}(\tau)-h^{(N)}(z_0),
		\quad
		H^{(N)}(\tau)=\tilde{H}^{(N)}(\tau)-h^{(N)}(z_0),
	\end{align}
	that is,
	\begin{align}
		&\tilde{H}(\tau)
		=
		-\mathrm{i}\sum_{n\in\mathbb{Z}}\sigma_n\hat{G}(n)\mathrm{e}^{2\pi\mathrm{i}n\tau},
		\quad
		\tilde{H}^{(N)}(\tau)
		=
		-\mathrm{i}\sum_{n\in\mathbb{Z}}\sigma_n\hat{G}^{(N)}(n)\mathrm{e}^{2\pi\mathrm{i}n\tau}
	\end{align}
	for $\tau\in S^1$, which yields that
	\begin{align}
		&\tilde{H}(\tau)-\tilde{H}^{(N)}(\tau)
		=
		-\mathrm{i}\sum_{n\in\mathbb{Z}}\sigma_n(\hat{G}(n)-\hat{G}^{(N)}(n))\mathrm{e}^{2\pi\mathrm{i}n\tau},
		\\
		&H(\tau)-H^{(N)}(\tau)
		=
		-(\tilde{h}(z_0)-\tilde{h}^{(N)}(z_0))-\mathrm{i}\sum_{n\in\mathbb{Z}}\sigma_n(\hat{G}(n)-\hat{G}^{(N)}(n))\mathrm{e}^{2\pi\mathrm{i}n\tau}.
	\end{align}
	The approximation error for $\tilde{h}$ can be estimated as
	\begin{align}
		\|\tilde{h}-\tilde{h}^{(N)}\|_{H^s(\partial\Omega)}
		&=
		\|\tilde{H}-\tilde{H}^{(N)}\|_{H^s}
		=
		\sum_{n\in\mathbb{Z}}|\sigma_n(\hat{G}(n)-\hat{G}^{(N)}(n))|^2\underline{n}^{2s}\\
		&=
		\sum_{n\in\mathbb{Z}\setminus\{0\}}|\hat{G}(n)-\hat{G}^{(N)}(n)|^2\underline{n}^{2s}
		\le
		\|G-G^{(N)}\|_{H^s}^2\\
		&=
		\|g-g^{(N)}\|_{H^s(\partial\Omega)}^2.
	\end{align}
	By continuous embedding $H^s(\partial\Omega)\subset C(\partial\Omega)$ for $s>1/2$, there exists a positive constant $C$ such that $\|v\|_{L^\infty(\Gamma)}\le C\|v\|_{H^s(\partial\Omega)}$ holds for all $v\in H^s(\partial\Omega)$.
	Therefore, we obtain the following estimate via the maximum principle for harmonic functions:
	\begin{equation}
		|\tilde{h}(z_0)-\tilde{h}^{(N)}(z_0)|^2
		\le
		\|\tilde{h}-\tilde{h}^{(N)}\|_{L^\infty(\Gamma)}^2
		\le
		C\|\tilde{h}-\tilde{h}^{(N)}\|_{H^s(\Gamma)}^2.
	\end{equation}
	Summarizing the above, we obtain
	\begin{align}
		\|h-h^{(N)}\|_{H^s(\partial\Omega)}
		&=
		\|H-H^{(N)}\|_{H^s(\partial\Omega)}
		\le
		C\|g-g^{(N)}\|_{H^s(\partial\Omega)},
	\end{align}
	which is the desired estimate.
\end{proof}

\section{Numerical conformal mapping for multiply-connected region}
\label{sec:MultiplyConnected}

In this section, we extend our method developed in the previous section to the case where $\Omega$ is a multiply-connected region.

\subsection{Case 1: Doubly-connected region}

We first consider the case where $\Omega$ is a nondegenerate doubly-connected region, that is, the complement of $\Omega$ on the Riemann sphere has the components $K_0$ and $K_1$, where $K_0$ is the unbounded component in a sense that $K_0$ contains $\infty$, and neither $K_0$ nor $K_1$ reduces to a single point.
In this case, it is well known that the following mapping theorem holds.

\begin{thm}[{\cite[Theorem 17.1a]{MR822470}}]
	Let $\Omega$ be a nondegenerate doubly-connected region.
	Then, there exists a unique $\rho\in(0,1)$ such that there exists a one-to-one analytic function $f$ that maps $\Omega$ onto the annular region $A_{\rho,1}$.
	If the outer boundaries correspond to each other, then $f$ is determined up to a rotation of the annular region.
\end{thm}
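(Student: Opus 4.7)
The plan is to construct $f$ concretely via the harmonic measure of $\Omega$ and then establish uniqueness by an annulus-automorphism argument. Since neither $K_0$ nor $K_1$ reduces to a single point, each boundary component has positive logarithmic capacity, so every boundary point is regular for the Dirichlet problem. Hence there exists a unique $u\in C^2(\Omega)\cap C(\overline{\Omega})$ solving $\triangle u=0$ in $\Omega$ with $u=0$ on $\partial K_0$ and $u=1$ on $\partial K_1$. Let $v$ be a local harmonic conjugate of $u$; because $\Omega$ is doubly connected, $v$ is multivalued with a single period
\begin{equation*}
	P=\oint_{\gamma}dv=-\oint_{\gamma}\partial_{n}u\,ds>0,
\end{equation*}
where $\gamma$ is any cycle in $\Omega$ homologous to $\partial K_1$ and the positivity follows from Hopf's lemma.

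Setting $\rho:=e^{-2\pi/P}\in(0,1)$, the candidate map is
\begin{equation*}
	f(z):=\exp\!\left(-\frac{2\pi}{P}\bigl(u(z)+\mathrm{i}v(z)\bigr)\right).
\end{equation*}
The normalization $-2\pi/P$ converts the period of $v$ into $-2\pi$, so $f$ is single-valued and holomorphic on $\Omega$, continuous up to $\partial\Omega$, with $|f|\equiv1$ on $\partial K_0$ and $|f|\equiv\rho$ on $\partial K_1$. To show that $f$ is a bijection onto $A_{\rho,1}$, I would first verify that $u$ has no critical points in $\Omega$: critical points of $u$ coincide with zeros of the holomorphic function $u_x-\mathrm{i}u_y$, whose count by the argument principle equals the winding of this function along $\partial\Omega$; since the gradient is proportional to the normal of each level curve $\{u=0\}$ and $\{u=1\}$, a direct computation on each boundary component (with opposite contributions from the two components) shows that the total winding vanishes. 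Hence $f$ is locally univalent. For any $w_0\in A_{\rho,1}$, the argument principle applied to $f-w_0$ yields that the preimage count equals the winding of $f(\partial\Omega)$ around $w_0$, which is $1$, since $f(\partial K_0)$ traces the unit circle once while $f(\partial K_1)$ traces $\gamma_\rho$ and does not enclose $w_0$.

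For uniqueness, suppose $f_j\colon\Omega\to A_{\rho_j,1}$ ($j=1,2$) are two such conformal maps. Then $F:=f_2\circ f_1^{-1}$ is a biholomorphism $A_{\rho_1,1}\to A_{\rho_2,1}$. Lifting $F$ through the exponential covers $\exp\colon\{\zeta:\log\rho_j<\Re\zeta<0\}\to A_{\rho_j,1}$, one obtains a linear map between vertical strips, which forces $\rho_1=\rho_2=:\rho$ and exhibits $F$ as either $w\mapsto cw$ or $w\mapsto c/w$ with suitable $|c|$. The latter form exchanges the two boundary circles, so the hypothesis that outer boundaries correspond rules it out, leaving $F$ a rotation.

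The main obstacle will be the bijectivity of $f$, which hinges on the absence of critical points and the attendant argument-principle bookkeeping on the disconnected cycle $\partial\Omega$; the interplay of the two oppositely oriented boundary components demands care in tracking signs and orientations. Once these geometric facts are in place, the remaining steps---single-valuedness of $f$, boundary moduli, and the annulus-automorphism classification---are standard.
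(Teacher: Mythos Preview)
The paper does not prove this theorem at all: it is quoted verbatim as \cite[Theorem~17.1a]{MR822470} and used as background for the numerical scheme in \S\ref{sec:MultiplyConnected}, with no accompanying argument. There is therefore no ``paper's own proof'' against which to compare your proposal.

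For what it is worth, your outline is the classical harmonic-measure construction (essentially the proof given in Henrici's book and in most standard references), and the overall architecture is sound. Two places deserve tightening if you intend this as a self-contained proof. First, in the bijectivity step you assert that $f(\partial K_0)$ traces $\gamma_1$ exactly once and $f(\partial K_1)$ traces $\gamma_\rho$ exactly once; this degree statement is precisely what the period normalization $-2\pi/P$ buys you, but you should make the link explicit rather than leave it implicit. Second, the ``winding vanishes'' claim for $u_x-\mathrm{i}u_y$ along $\partial\Omega$ is correct but your justification (``gradient proportional to the normal'') presupposes that $\nabla u$ is nonvanishing on $\partial\Omega$, which itself follows from Hopf's lemma and deserves a sentence. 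The uniqueness argument via strip covers is standard and fine as stated.
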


The number $\rho^{-1}$ is called the modulus of $\Omega$.
Based on Amano's method, in computing the forward conformal mapping $f$, we first arbitrarily take a base point $z_0\in K_1$, and express $f$ in the following form:
\begin{equation}
	f(z)=(z-z_0)\exp[g(z)+\mathrm{i}h(z)].
\end{equation}
Then, a function $g$ is characterized as the solution to the following potential problem:
\begin{equation}
	\begin{dcases*}
		\triangle g=0&in $\Omega$,\\
		g(z)=-\log|z-z_0|&for $z\in\partial K_0$,\\
		g(z)=\log\rho-\log|z-z_0|&for $z\in\partial K_1$.
	\end{dcases*}
\end{equation}
We here note that unknowns in the above problem are $g$ and $\rho$.
Also, note that the existence of a conjugate harmonic function $h$ of $g$ is not apparent in this case.
In order to describe the sufficient and necessary condition for the existence of the conjugate harmonic function, we introduce the notion of the conjugate period.

\begin{dfn}[{\cite{MR822470}}]
	Let $\Omega$ be a region of finite connectivity, that is, the complement of $\Omega$ on the Riemann sphere have the components $K_1,K_2,\ldots,K_n$, where $K_0$ is the possibly empty unbounded component.
	Let $\Gamma_i$ be a closed curve in $\Omega$ which has winding number $+1$ for all points $z\in K_i$, and which has winding number $0$ for all points $z\in K_j$ ($j\neq i$).
	For a differentiable function $u$ in $\Omega$, the quantity
	\begin{equation}
		\eta_i:=\int_{\Gamma_i}(-u_y\,\mathrm{d}x+u_x\,\mathrm{d}y)
	\end{equation}
	is called the conjugate period of $u$ with respect to $K_i$.
\end{dfn}

Using the conjugate period, we can write down the sufficient and necessary condition for the existence of the conjugate harmonic function as in the following theorem.

\begin{thm}[{\cite[Theorem 15.1d]{MR822470}}]
	A harmonic function $u$ in a finitely connected region $\Omega$ has a conjugate harmonic function in $\Omega$ if and only if the conjugate periods of $u$ for all connected components are equal to zero.
\end{thm}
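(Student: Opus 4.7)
The plan is to identify a conjugate harmonic function of $u$ with a primitive of the closed $1$-form
\[
\omega := -u_y\,\mathrm{d}x + u_x\,\mathrm{d}y,
\]
and then to use the fact that on a finitely connected planar region the exactness of a closed $1$-form can be tested on the finite family of loops $\Gamma_1,\dots,\Gamma_n$.

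First I would observe that $\omega$ is closed on $\Omega$ because $\mathrm{d}\omega=(u_{xx}+u_{yy})\,\mathrm{d}x\wedge\mathrm{d}y=0$ by harmonicity of $u$. Next, the Cauchy--Riemann equations $v_x=-u_y$, $v_y=u_x$ say exactly that $\mathrm{d}v=\omega$. So $v$ is a conjugate harmonic function of $u$ on $\Omega$ if and only if $\omega$ admits a global primitive on $\Omega$, and the theorem reduces to showing that $\omega$ is exact if and only if every $\eta_i=\int_{\Gamma_i}\omega$ vanishes.

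The necessity is immediate: if $\omega=\mathrm{d}v$, then $\int_{\Gamma_i}\omega=\int_{\Gamma_i}\mathrm{d}v=0$ by the fundamental theorem of calculus along curves. For sufficiency I would fix a base point $z_0\in\Omega$ and set $v(z):=\int_{z_0}^{z}\omega$ along any piecewise smooth path in $\Omega$. What has to be shown is that $\int_\gamma\omega=0$ for every closed curve $\gamma$ in $\Omega$. Given such a $\gamma$, let $n_i$ be its winding number around $K_i$. Then $\gamma-\sum_{i=1}^n n_i\Gamma_i$ has winding number zero around every bounded component of the complement of $\Omega$, and the global homological form of Cauchy's theorem says that such a cycle is null-homologous in $\Omega$. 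Since $\omega$ is closed, integration along homologous cycles agrees, so
\[
\int_\gamma\omega = \sum_{i=1}^n n_i\int_{\Gamma_i}\omega = \sum_{i=1}^n n_i\eta_i = 0.
\]
Hence $v$ is well defined with $\mathrm{d}v=\omega$, and $v$ is automatically harmonic since $v_{xx}+v_{yy}=-u_{yx}+u_{xy}=0$.

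The real content sits in the homological step: a $1$-cycle with vanishing winding number around every bounded complementary component is null-homologous in a finitely connected planar $\Omega$, so that a finite list of periods suffices to detect exactness. I would invoke this as the standard global form of Cauchy's theorem for planar regions of finite connectivity; if a self-contained proof is wanted, one obtains it by a polygonal approximation argument that expresses $\gamma-\sum_{i=1}^n n_i\Gamma_i$ explicitly as the boundary of a $2$-chain lying inside $\Omega$.
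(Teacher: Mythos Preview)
Your argument is correct and is the standard proof of this classical result. However, the paper does not actually prove this theorem: it is stated with a citation to Henrici's book \cite[Theorem~15.1d]{MR822470} and used as a known fact, with no proof given in the paper itself. So there is nothing in the paper to compare your proof against; your write-up supplies a complete argument where the paper simply invokes the literature.
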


If we apply the MFS to compute $g$, an approximate solution $g^{(N)}$ is of the form
\begin{equation}
	g^{(N)}(z)
	=
	\sum_{\nu=1}^2\sum_{k=1}^{N}Q_{\nu k}\log|z-\zeta_{\nu k}|,
\end{equation}
where $\{\zeta_{\nu k}\}_{k=1}^{N}\subset\mathring{K}_\nu$ ($\nu=1,2$) are the singular points.
The conjugate period of $g^{(N)}$ for $K_2$ can be computed as
\begin{equation}
	\int_{\Gamma_2}(-g_y^{(N)}\,\mathrm{d}x+g_x^{(N)}\,\mathrm{d}y)
	=
	2\pi\sum_{k=1}^{N}Q_{2k}.
\end{equation}
Therefore we have to add the following condition to assure the existence of the conjugate harmonic function $h^{(N)}$ of $g^{(N)}$:
\begin{equation}
	\sum_{k=0}^{N-1}Q_{2k}=0.
	\label{eq:condition_conjugate_period}
\end{equation}
Hence combining the above condition and the collocation equations
\begin{alignat}{2}
	&g^{(N)}(z_{0j})=-\log|z_{0j}-z_0|,&\quad&j=0,1,\ldots,N-1,\\
	&g^{(N)}(z_{1j})=\log R^{(N)}-\log|z_{1j}-z_0|,&\quad&j=0,1,\ldots,N-1,
\end{alignat}
we can compute $g^{(N)}$ together with an approximation $R^{(N)}$ of $\rho$, where $\{z_{\mu j}\}_{j=1}^{N}\subset\partial K_\mu$ ($\mu=1,2$) are the collocation points.

On the other hand, if we apply the DSM, $g^{(N)}$ is given by
\begin{equation}
	g^{(N)}(z)=\sum_{\nu=1}^2\sum_{k=1}^{N}Q_{\nu k}\Re\left(\frac{n_{\nu k}}{z-\zeta_{\nu k}}\right),
\end{equation}
and its conjugate period with respect to $K_2$ is equal to $0$.
Therefore, the existence of the conjugate harmonic function of $g^{(N)}$ is always assured without any condition such as \eqref{eq:condition_conjugate_period}, and it is given by
\begin{equation}
	h^{(N)}(z)=\sum_{\nu=1}^2\sum_{k=1}^{N}Q_{\nu k}\Im\left(\frac{n_{\nu k}}{z-\zeta_{\nu k}}\right).
\end{equation}
Hence there remains one degree of freedom to determine $\{Q_{\nu k}\}$ and $R^{(N)}$, which implies that there exists a possibility to construct more ``suitable'' numerical conformal mapping depending on the problem to be considered.
However, we here adopt the same condition \eqref{eq:condition_conjugate_period} for Amano's method to compare numerical results in the almost same situation.

\subsection{$n$-ly connected region ($n\ge3$)}

In this case, there are several possibilities for canonical regions.

\begin{thm}[{\cite{MR822470}}]
	Let $\Omega$ be a nondegenerate $n$-ly connected region with $n\ge3$, that is, the complement of $\Omega$ on the Riemann sphere have the components $K_1,K_2,\ldots,K_n$, where $K_1$ is the unbounded component.
	Then, there exist $n-1$ real numbers $\rho_j$ $(j=2,3,\ldots,n)$ such that $0<\rho_n<\rho_j<1$ $(j=2,3,\ldots,n-1)$, and that there exists an analytic function $f$ that maps $\Omega$ conformally onto the annular region $\mathcal{A}_{\rho_n,1}$ cut along $n-2$ mutually disjoint arcs $\Lambda_j$ located on the circles $|w|=\rho_j$ $(j=2,3,\ldots,n-1)$.
	The mapping function $f$ can be extended analytically to the curves $\partial K_j$.
	The images of $\partial K_1$ and of $\partial K_n$ are the circles $\Lambda_0\colon|w|=1$ and $\Lambda_n\colon|w|=\rho_{n}$, respectively.
	The images of the curves $\partial K_j$ are the arcs $\Lambda_j$ $(j=2,3,\ldots,n-1)$, traversed twice.
	The function $f$ is determined up to a factor of modulus $1$.
\end{thm}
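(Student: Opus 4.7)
The plan is to construct $f$ in the form $f(z)=\exp\bigl(g(z)+\mathrm{i}h(z)\bigr)$ with $g$ real harmonic in $\Omega$, so that $|f|$ is constant on each boundary component $\partial K_j$; the constants are the radii $\rho_j$ sought in the statement, and they are determined together with $g$ by requiring that the multi-valued conjugate $h$ have periods compatible with the geometry claimed for the image.

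For $j=2,\dots,n$ let $\omega_j$ be the harmonic measure of $\partial K_j$ in $\Omega$, that is, the bounded harmonic function in $\Omega$ with boundary values $\delta_{jk}$ on $\partial K_k$; each $\omega_j$ exists and is unique by solvability of the Dirichlet problem. I then seek $g$ of the form
\[
g(z)=\sum_{j=2}^{n}(\log\rho_j)\,\omega_j(z).
\]
Writing $P_{jk}$ for the conjugate period of $\omega_j$ around a cycle $\Gamma_k\subset\Omega$ that winds once around $K_k$ and not around the other components, single-valuedness of $\exp(\mathrm{i}h)$ with $\partial K_1$ and $\partial K_n$ mapping onto full circles traversed once and each $\partial K_j$ ($2\le j\le n-1$) onto an arc traversed twice is equivalent to the linear system
\[
\sum_{j=2}^{n}P_{jk}\log\rho_j=2\pi\,\delta_{kn},\qquad k=2,\dots,n.
\]

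The key step is unique solvability of this $(n-1)\times(n-1)$ system. By Green's identity applied to $u=\sum_{j=2}^{n}\alpha_j\omega_j$, the quadratic form $(\alpha_j)\mapsto\sum_{j,k\ge2}P_{jk}\alpha_j\alpha_k$ equals, up to a sign fixed by orientation, the Dirichlet integral of $u$, hence it is definite and vanishes only when $u\equiv 0$, which in turn forces all $\alpha_j=0$ because the boundary values $\alpha_j$ on $\partial K_j$ would then vanish. Consequently $P=(P_{jk})$ is symmetric and nonsingular, yielding unique real numbers $\log\rho_j$, hence a unique $g$ and, up to an additive real constant in $h$ that becomes the unimodular factor in the statement, a unique holomorphic $f=\exp(g+\mathrm{i}h)$ on $\Omega$.

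Finally, I would verify the geometric conclusions. The inequalities $0<\rho_n<\rho_j<1$ follow from the strong maximum principle applied to $g$: the extremes of $g$ must be attained on different boundary components, and the nonzero conjugate periods around $\partial K_1$ and $\partial K_n$ combined with zero periods around the interior boundaries force the maximum onto $\partial K_1$ and the minimum onto $\partial K_n$, while the intermediate values $\log\rho_j$ lie strictly between. Conformality and the image shape follow component by component via the argument principle: on $\partial K_1$ and $\partial K_n$, $|f|$ is constant and $\arg f$ has net change $\pm 2\pi$, yielding full circles; on each interior $\partial K_j$, $|f|$ is constant and $\arg f$ returns to itself, so the image is a doubly traversed arc $\Lambda_j$; a winding-number count then shows $f$ is univalent onto $\mathcal{A}_{\rho_n,1}\setminus\bigcup_{j=2}^{n-1}\Lambda_j$. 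The main obstacle is the nondegeneracy of the period matrix $P$, on which both existence and uniqueness of the $\rho_j$ rest; the geometric inequalities and univalence are then standard consequences of the maximum and argument principles.
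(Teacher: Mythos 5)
The paper offers no proof of this theorem: it is quoted as a known canonical-mapping result from the cited reference (Henrici's \emph{Applied and Computational Complex Analysis}), so there is no in-paper argument to compare against. Your sketch is the classical textbook proof of the circular-slit-annulus mapping theorem --- represent $\log|f|$ as $\sum_{j=2}^{n}(\log\rho_j)\omega_j$ in terms of harmonic measures, impose the period conditions needed for $f=\exp(g+\mathrm{i}h)$ to be single-valued with the prescribed winding behaviour, invert the period matrix via its identification with the Dirichlet--Gram matrix of the $\omega_j$, and read off the geometry of the image from the argument principle --- and this is essentially the proof found in the cited source; the outline is sound. A few points deserve tightening. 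First, vanishing of the Dirichlet integral of $u=\sum_{j}\alpha_j\omega_j$ gives only that $u$ is \emph{constant}; you then need the boundary value $0$ on $\partial K_1$ to conclude the constant is zero and hence $\alpha_j=0$ for all $j$. Second, your uniqueness argument establishes uniqueness only within the family $\exp(g+\mathrm{i}h)$ with $g$ a combination of harmonic measures; to obtain the stated uniqueness up to a unimodular factor you must also observe that for \emph{any} conformal map $F$ onto such a canonical domain with the stated boundary correspondence, $\log|F|$ is harmonic, constant on each $\partial K_j$ and zero on $\partial K_1$, hence is itself such a combination and satisfies the same (uniquely solvable) linear system. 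Third, Green's identity on $\partial\Omega$, the Hopf-lemma step that excludes the slit components $\partial K_j$ ($2\le j\le n-1$) from carrying the extrema of $g$, and the analytic continuation of $f$ across the boundary all presuppose boundary regularity (Jordan curves, and analyticity for the continuation claim), which the theorem statement tacitly assumes; for a truly general nondegenerate region one would first reduce to smooth boundaries by a preliminary conformal map.
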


Also, in this case, we represent $f$ as
\begin{equation}
	f(z)=(z-z_0)\exp[g(z)+\mathrm{i}h(z)],
\end{equation}
where $z_0$ is a base point arbitrarily chosen from $K_n$.
In Amano's method, an approximation $g^{(N)}$ of $g$ is given by
\begin{equation}
	g^{(N)}(z)=\sum_{\nu=1}^n\sum_{k=1}^{N}Q_{\nu k}\log|z-\zeta_{\nu k}|,
\end{equation}
where $\{\zeta_{\nu k}\}_{k=1}^{N}\subset\mathring{K}_\nu$ ($\nu=1,2,\ldots,n$) are the singular points, and its conjugate period with respect to $K_\nu$ is equal to $2\pi\sum_{k=1}^{N}Q_{\nu k}$ ($\nu=2,3,\ldots,n$).
Therefore coefficients $\{Q_{\nu k}\}$ and approximations $\{R_\mu\}$ of $\{\rho_\mu\}$ are obtained by solving the following collocation equations and conditions for the existence of the conjugate harmonic function:
\begin{alignat}{2}
	&g(z_{1j})=-\log|z_{1j}-z_0|&\quad&(j=1,2,\ldots,N),\\
	&g(z_{\mu j})=\log R_\mu^{(N)}-\log|z_{\mu j}-z_0|&\quad&(\mu=2,3,\ldots,n;\,j=1,2,\ldots,N),\\
	&\sum_{k=1}^{N}Q_{\nu k}=0&\quad&(\nu=2,3,\ldots,n)
\end{alignat}

If we apply our method, $g^{(N)}$ is given by
\begin{equation}
	g^{(N)}(z)=\sum_{\nu=0}^n\sum_{k=0}^{N-1}Q_{\nu k}\Re\left(\frac{n_{\nu k}}{z-\zeta_{\nu k}}\right),
\end{equation}
and conjugate periods of $g^{(N)}$ with respect to $K_\nu$ ($\nu=2,3,\ldots,n$) are all equal to zero; thus, we can freely add $n$ equations to determine $\{Q_{\nu k}\}$ and $\{R_\mu^{(N)}\}$.


\section{Numerical experiments}
\label{sec:numerical_experiments}

We show the results of several numerical experiments in this section.

\subsection{On the arrangements of the singular points, the collocation points, and the dipole moments}
\label{subsec:Arrangements}

In Theorem \ref{thm:DSM}, we gave an example on the arrangements of the singular points, the collocation points, and the dipole moments by using the peripheral conformal mapping.
As we have mentioned in Remark \ref{rem:peripheral}, the peripheral conformal mapping can be approximately constructed by using FFT; therefore, the arrangements by the peripheral conformal mapping would produce ``nice'' numerical results.
On the other hand, in a series of the study on numerical conformal mappings by Amano's group, the following simple arrangements of the singular points $\{\zeta_k^{\mathrm{A}}(r)\}_{k=1}^N$, which we call Amano's arrangement, are adopted:
\begin{equation}
	\zeta_k^{\mathrm{A}}(r)
	=
	z_k-\frac{\mathrm{i}r}{2}(z_{k+1}-z_{k-1}),
	\quad
	k=1,2,\ldots,N,
	\quad
	r>0,
\end{equation}
where $z_0:=z_N$ and $z_{N+1}:=z_1$.
Roughly speaking, the above rule places the singular points in approximately outward normal directions (see Figure \ref{fig:AmanoRule} for geometrical interpretation).
\begin{figure}[tb]
	\centering
	\includegraphics[width=.5\hsize]{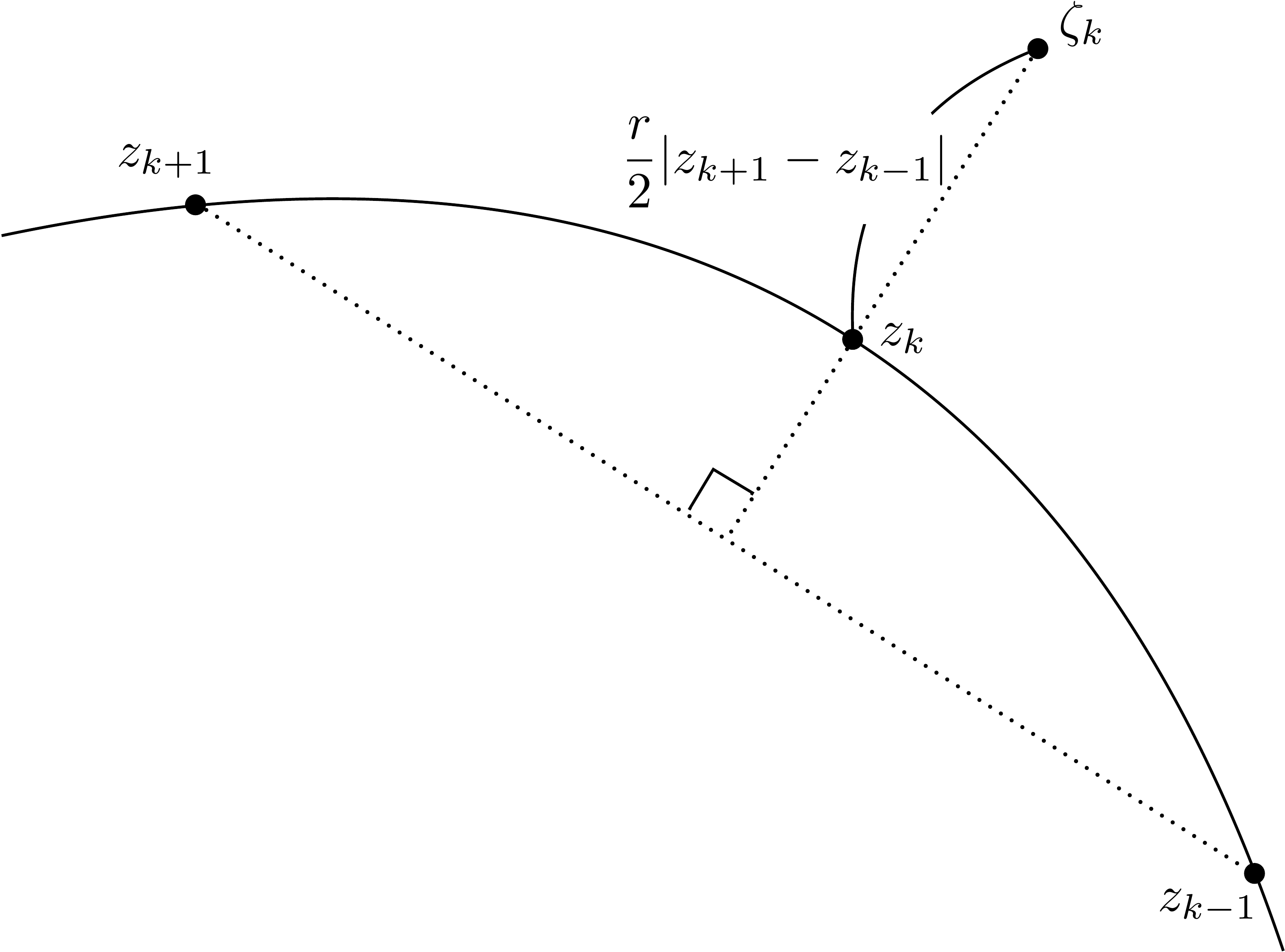}
	\caption{Amano's arrangement of the singular points}
	\label{fig:AmanoRule}
\end{figure}
The rule of Amano's arrangements is quite simple and its implementation is so easy; however, it is numerically observed that they offer highly accurate numerical results.
Therefore, it is natural to expect that there are some relationships between the arrangements by using conformal mappings and Amano's arrangements.
The following theorem gives a partial answer to the above conjecture.

\begin{thm}
	Amano's arrangement is a linear approximation of the arrangement by conformal mapping.
	More precisely, the following relation holds:
	\begin{equation}
		\zeta_k^{\mathrm{C}}(R)
		=
		\zeta_k^{\mathrm{A}}\left(\frac{R-1}{\sin(2\pi/N)}\right)+O((R-1)^2)+O((R-1)N^{-1})
		\quad
		\text{as}\ R\downarrow1,\ N\to\infty,
	\end{equation}
	where $\zeta_k^{\mathrm{C}}(R)=\Psi(R\omega^{k})$ represents the singular points arranged by the peripheral conformal mapping $\Psi$ of $\partial\Omega$ with reference radius $1$.
\end{thm}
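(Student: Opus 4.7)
The plan is to compare two Taylor expansions of the peripheral conformal mapping $\Psi$: one in the radial parameter $R$ (which produces $\zeta_k^{\mathrm{C}}(R)$) and one in the angular direction around $\omega^k$ (which, via a finite difference, produces $\zeta_k^{\mathrm{A}}(r)$). Since $\Psi$ is holomorphic on a neighborhood of the closed annulus $\overline{\mathcal{A}}_{\kappa^{-1},\kappa}$, Cauchy's estimates give uniform bounds on $\Psi'$ and $\Psi''$ on $\gamma_1$, so every remainder term below can be controlled uniformly in $k$.

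First I would expand in the radial direction:
\begin{equation}
\zeta_k^{\mathrm{C}}(R)=\Psi(R\omega^k)=\Psi(\omega^k)+(R-1)\omega^k\Psi'(\omega^k)+O((R-1)^2)=z_k+(R-1)\omega^k\Psi'(\omega^k)+O((R-1)^2).
\end{equation}
Next I would Taylor-expand $\Psi$ around $\omega^k$ to second order in the two neighboring nodes $\omega^{k\pm 1}=\omega^k+\omega^k(\omega^{\pm 1}-1)$ and subtract. The linear-in-$\Psi''$ terms involve the difference
\begin{equation}
(\omega-1)^2-(\omega^{-1}-1)^2=(\omega-\omega^{-1})(\omega+\omega^{-1}-2)=2\mathrm{i}\sin(2\pi/N)\cdot(2\cos(2\pi/N)-2),
\end{equation}
which is $O(N^{-3})$ because $\sin(2\pi/N)=O(N^{-1})$ and $1-\cos(2\pi/N)=O(N^{-2})$. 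Thus the $\Psi''$ contributions in the difference $z_{k+1}-z_{k-1}$ cancel to higher order, leaving
\begin{equation}
z_{k+1}-z_{k-1}=2\mathrm{i}\sin(2\pi/N)\,\omega^k\Psi'(\omega^k)+O(N^{-3}).
\end{equation}
This symmetry-driven cancellation is the heart of the argument.

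Finally I would substitute $r=(R-1)/\sin(2\pi/N)$ into the definition of Amano's arrangement:
\begin{equation}
\zeta_k^{\mathrm{A}}(r)=z_k-\tfrac{\mathrm{i}r}{2}(z_{k+1}-z_{k-1})=z_k+(R-1)\omega^k\Psi'(\omega^k)+\tfrac{-\mathrm{i}(R-1)}{2\sin(2\pi/N)}\cdot O(N^{-3}).
\end{equation}
Since $1/\sin(2\pi/N)=O(N)$, the trailing error is $O((R-1)N^{-2})$, which is absorbed into the stated $O((R-1)N^{-1})$. Subtracting this from the expansion of $\zeta_k^{\mathrm{C}}(R)$ yields the claim.

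The only genuine obstacle is bookkeeping: ensuring that the Taylor remainders are genuinely $O(|\omega^{\pm 1}-1|^3)$ with a constant independent of $k$, and that the $\Psi''$-cross-term cancellation is verified at the right order (rather than merely $O(N^{-2})$, which would not suffice). Both follow from the uniform bounds on $\Psi',\Psi'',\Psi'''$ on $\gamma_1$ furnished by the hypothesis that $\Psi$ extends conformally to $\overline{\mathcal{A}}_{\kappa^{-1},\kappa}$ with $\kappa>1$. Everything else is elementary trigonometric estimation.
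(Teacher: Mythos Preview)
Your argument is correct and follows the same Taylor-expansion route as the paper. The paper, however, stops at a first-order expansion of $z_{k\pm1}$ about $\omega^k$, obtaining $z_{k+1}-z_{k-1}=2\mathrm{i}\sin(2\pi/N)\,\omega^k\Psi'(\omega^k)+O(N^{-2})$; after dividing by $2\sin(2\pi/N)$ this already yields the $O((R-1)N^{-1})$ term in the statement, so your careful $\Psi''$-cancellation (giving $O(N^{-3})$) is a genuine sharpening but is not needed---your parenthetical remark that ``merely $O(N^{-2})$ would not suffice'' is mistaken.
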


\begin{proof}
	The key and only tool in the proof is the Taylor expansion.
	By the Taylor expansion, we have
	\begin{align}
		\zeta_k^{\mathrm{C}}(R)
		&=
		\Psi(R\omega^k)
		=
		\Psi(\omega^k+(R-1)\omega^k)\\
		&=
		\Psi(\omega^k)+\Psi'(\omega^k)(R-1)\omega^k+O((R-1)^2)\\
		&=
		z_k+(R-1)\Psi'(\omega^k)\omega^k+O((R-1)^2)
		\label{eq:Taylor}
	\end{align}
	as $R\downarrow1$.
	Moreover, we obtain the following relations by using the Taylor expansion again:
	\begin{align}
		z_{k+1}
		&=
		\Psi(\omega^{k+1})
		=
		\Psi(\omega^k+(\omega^{k+1}-\omega^k))\\
		&=
		\Psi(\omega^k)+\Psi'(\omega^k)(\omega^{k+1}-\omega^k)+O(|\omega-1|^2)\\
		&=
		z_k+\Psi'(\omega^k)\omega^k(\omega-1)+O(N^{-2}),
	\end{align}
	and
	\begin{align}
		z_{k-1}
		=
		z_k+\Psi'(\omega^k)\omega^k(\omega^{-1}-1)+O(N^{-2})
	\end{align}
	as $N\to\infty$.
	Combining the above two relations, we have
	\begin{equation}
		\Psi'(\omega^k)\omega^k
		=
		-\frac{\mathrm{i}}{2\sin(2\pi/N)}(z_{k+1}-z_{k-1})
		+
		O(N^{-1})
		\quad
		\text{as}\ N\to\infty.
	\end{equation}
	Substituting the above expression into \eqref{eq:Taylor}, we obtain
	\begin{align}
		\zeta_k^{\mathrm{C}}(R)
		&=
		z_k-\frac{\mathrm{i}(R-1)}{2\sin(2\pi/N)}(z_{k+1}-z_{k-1})+O((R-1)^2)+O((R-1)N^{-1})\\
		&=
		\zeta_k^{\mathrm{A}}\left(\frac{R-1}{\sin(2\pi/N)}\right)+O((R-1)^2)+O((R-1)N^{-1})
	\end{align}
	as $R\downarrow1$ and $N\to\infty$, which is the desired relation.
\end{proof}

There are several possibilities on the choices of the singular points $\{\zeta_k\}_{k=1}^{N}$, the collocation points $\{z_j\}_{j=1}^{N}$, and the dipole moments $\{n_k\}_{k=1}^{N}$ in Algorithm \ref{alg:forward_numerical_conformal_mapping}, and the singular points $\{\xi_k\}_{k=1}^{N}$ in Algorithm \ref{alg:backward_numerical_conformal_mapping}.
In the following numerical examples, they are given by the rules below:
The collocation points $\{z_j\}_{j=1}^N$ are given by $z_j=\Phi(j/N)$, where $\Phi\colon[0,1]\rightarrow\partial\Omega$ is an $S^1$-parameterization of $\partial\Omega$.
The singular points $\{\zeta_k\}_{k=1}^{N}$ for forward conformal mapping and $\{\xi_k\}_{k=1}^{N}$ for backward conformal mapping are given by Amano's arrangement, that is,
\begin{equation}
	\zeta_k
	=
	\zeta_k^{\mathrm{A}}(r_{\mathrm{f}})
	=
	z_k-\frac{\mathrm{i}r_{\mathrm{f}}}{2}(z_{k+1}-z_{k-1}),
	\quad
	\xi_k
	=
	\xi_k^{\mathrm{A}}(r_{\mathrm{b}})
	=
	w_k-\frac{\mathrm{i}r_{\mathrm{b}}}{2}(w_{k+1}-w_{k-1}),
\end{equation}
for $k=1,2,\ldots,N$.
The dipole moments $\{n_k\}_{k=1}^{N}$ are given by Amano-like rule.
Namely, we define $n_k$ as 
\begin{equation}
	n_k=-\mathrm{i}\frac{\zeta_{k+1}-\zeta_{k-1}}{|\zeta_{k+1}-\zeta_{k-1}|},
	\quad
	k=1,2,\ldots,N.
\end{equation}
The parameters $r_{\mathrm{f}}$ and $r_{\mathrm{b}}$ for arranging the singular points are given in the form $r_{\mathrm{f}}=\tilde{r}_{\mathrm{f}}N$ and $r_{\mathrm{b}}=\tilde{r}_{\mathrm{b}}N$, respectively, because $|z_{k+1}-z_{k-1}|$ and $|w_{k+1}-w_{k-1}|$ are approximately equal to $O(N^{-1})$.
The explicit values of $\tilde{r}_{\mathrm{f}}$ and $\tilde{r}_{\mathrm{b}}$, and the position of the base point $z_0$ are written in each numerical experiments.

\subsection{Simply-connected region}

\subsubsection{Disk--Disk}

The first example is the simplest one, a bidirectional numerical conformal mapping between the unit disk.
In this case, the forward numerical conformal mapping $f$ and backward one $f_*$ are explicitly given by
\begin{equation}
	f(z)=\frac{z-z_0}{1-\overline{z}_0z},
	\quad
	f_*(w)=\frac{w+z_0}{1+\overline{z}_0w}.
\end{equation}
Figure \ref{fig:DiskToDiskForward} depicts results of our numerical experiments for forward conformal mapping.
\begin{figure}[tb]
	\begin{minipage}{.435\hsize}
		\includegraphics[width=\hsize]{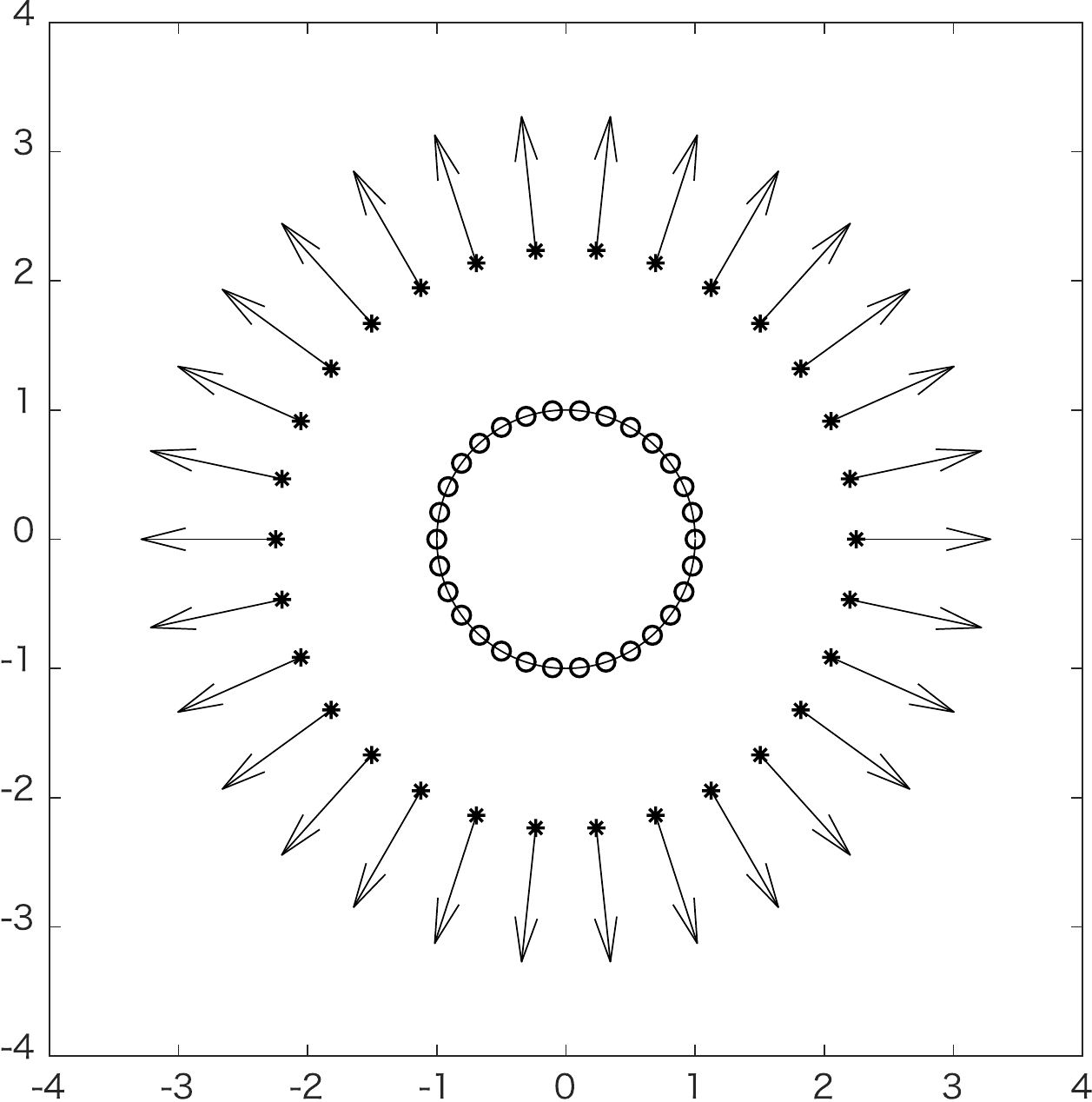}
	\end{minipage}%
	\begin{minipage}{.565\hsize}
		\includegraphics[width=\hsize]{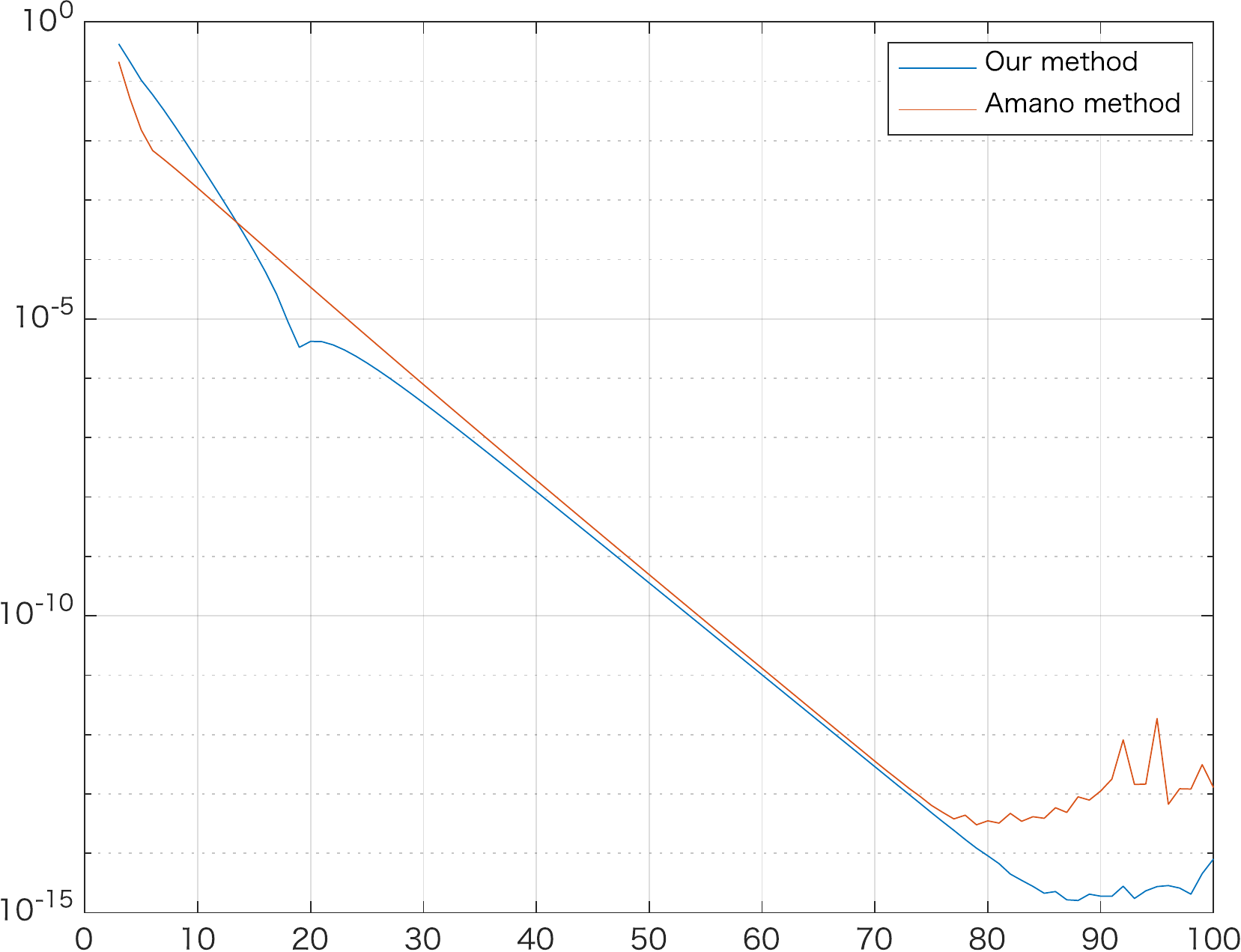}
	\end{minipage}%
	
	\begin{minipage}{.5\hsize}
		\includegraphics[width=\hsize]{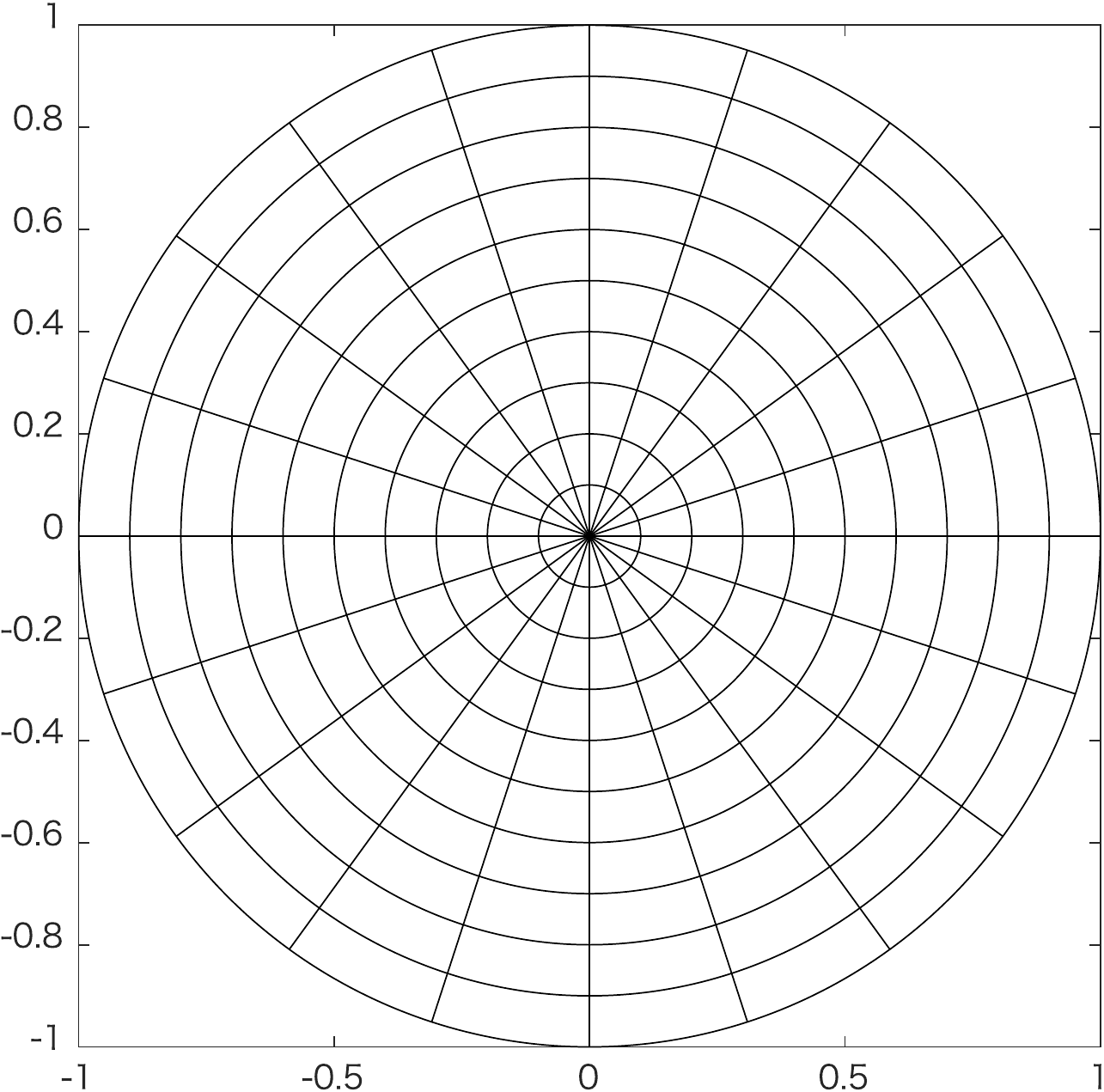}
	\end{minipage}%
	\begin{minipage}{.5\hsize}
		\includegraphics[width=\hsize]{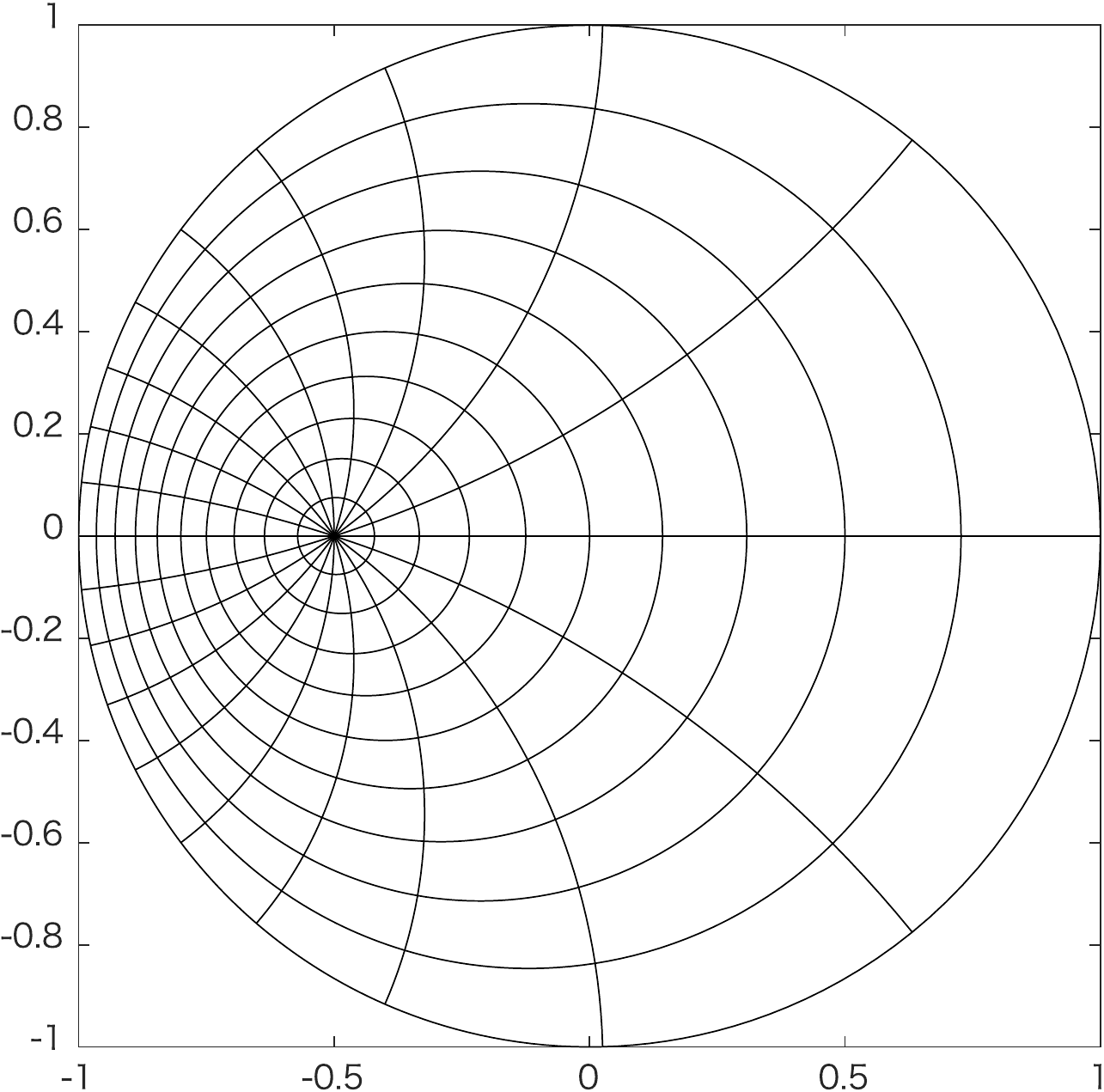}
	\end{minipage}%
	\caption{Numerical results for forward conformal mapping from the unit disk onto itself: (upper left) the arrangements of the singular points $\{\zeta_k\}_{k=1}^{N}$, the collocation points $\{z_j\}_{j=1}^{N}$, and the dipole moments $\{n_k\}_{k=1}^{N}$; (upper right) $N$-$\log_{10}\|f-f^{(N)}\|_{L^\infty(\Omega)}$ graph; (lower left) preimage; (lower right) image by numerical forward conformal mapping, where $z_0=0.5$, and $r_{\mathrm{f}}=0.2$.}
	\label{fig:DiskToDiskForward}
\end{figure}
The upper left figure depicts the arrangements of the singular points $\{\zeta_k\}_{k=1}^{N}$, the collocation points $\{z_j\}_{j=1}^{N}$, and the dipole moments $\{n_k\}_{k=1}^{N}$ when $N=30$.
The upper right figure represents the behavior of the approximation error $\|u-u^{(N)}\|_{L^\infty(\Omega)}$.
The horizontal axis represents the number $N$ of points, and the vertical axis the common logarithm of the approximation error.
We compare our method with Amano's method and can observe that the approximation errors for both methods decay exponentially for $N$.
The lower left figure shows the preimage, and its image by the numerical conformal mapping is depicted in the lower right figure.
Numerical results for backward conformal mapping are shown in Figure \ref{fig:DiskToDiskBackward}.
The parameters are given as $z_0=0.5$, $r_{\mathrm{f}}=0.2$, and $r_{\mathrm{b}}=0.1$.
\begin{figure}[tb]
	\begin{minipage}{.44\hsize}
		\centering
		\includegraphics[width=\hsize]{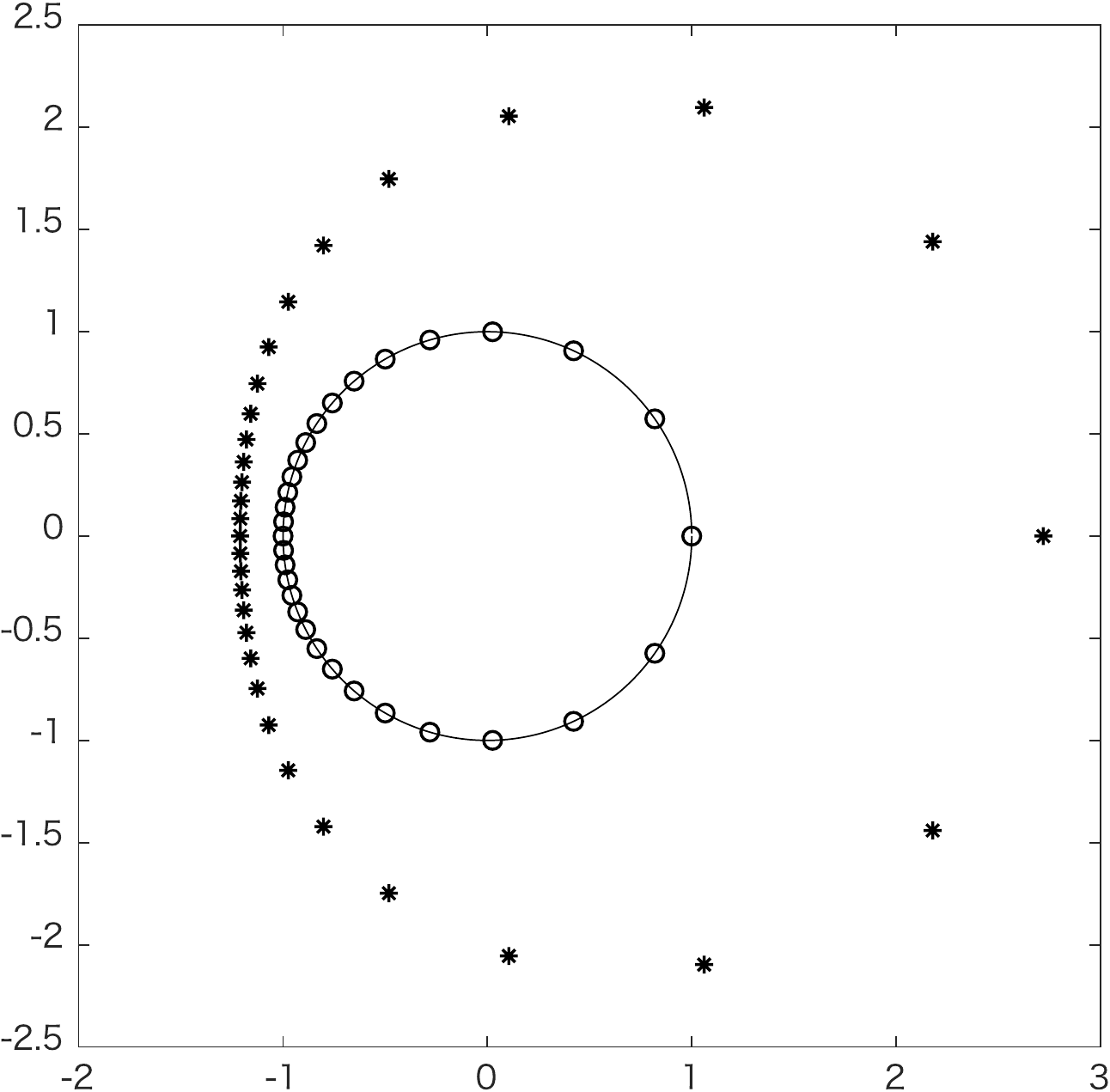}
	\end{minipage}%
	\begin{minipage}{.56\hsize}
		\centering
		\includegraphics[width=\hsize]{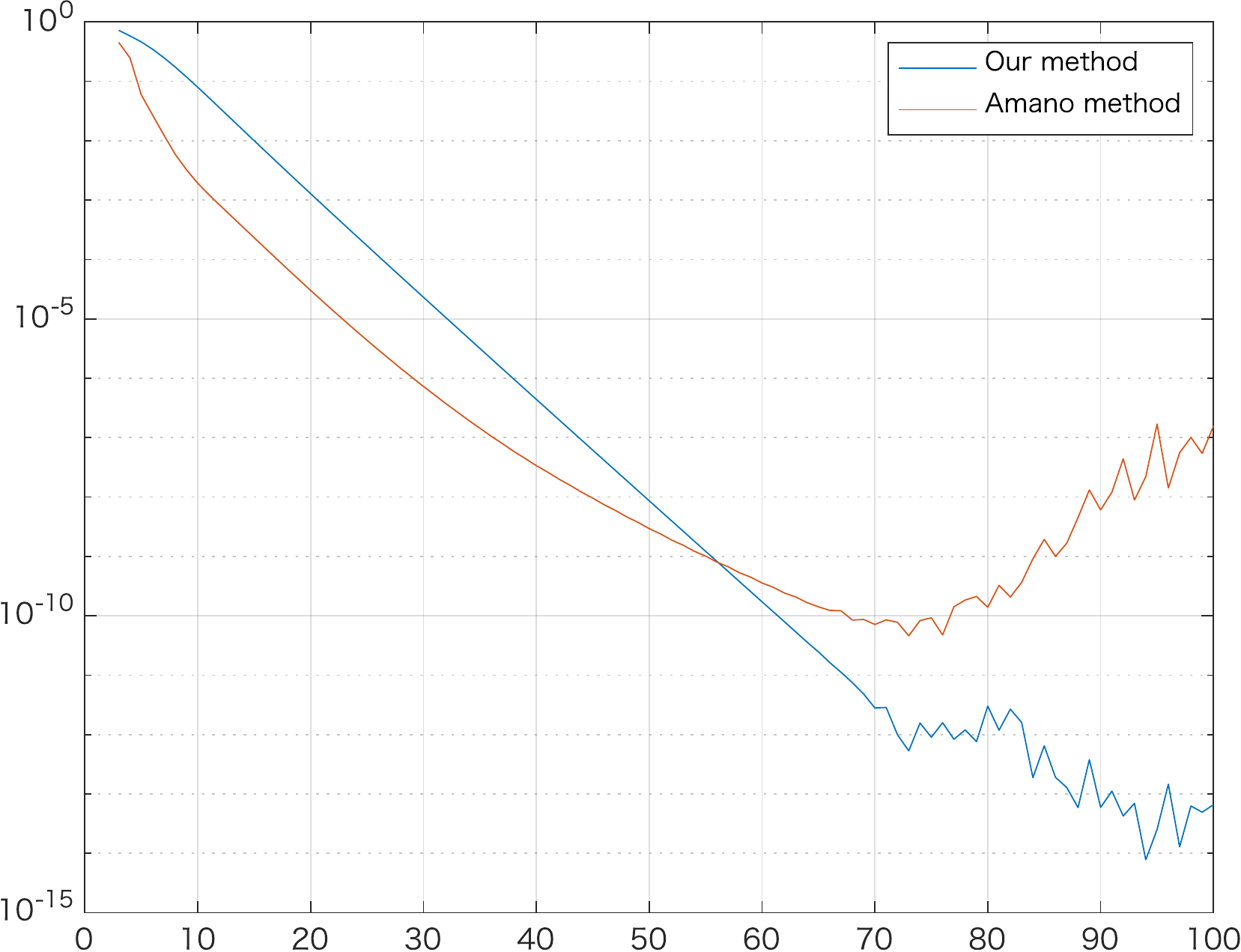}
	\end{minipage}%
	
	\begin{minipage}{.5\hsize}
		\includegraphics[width=\hsize]{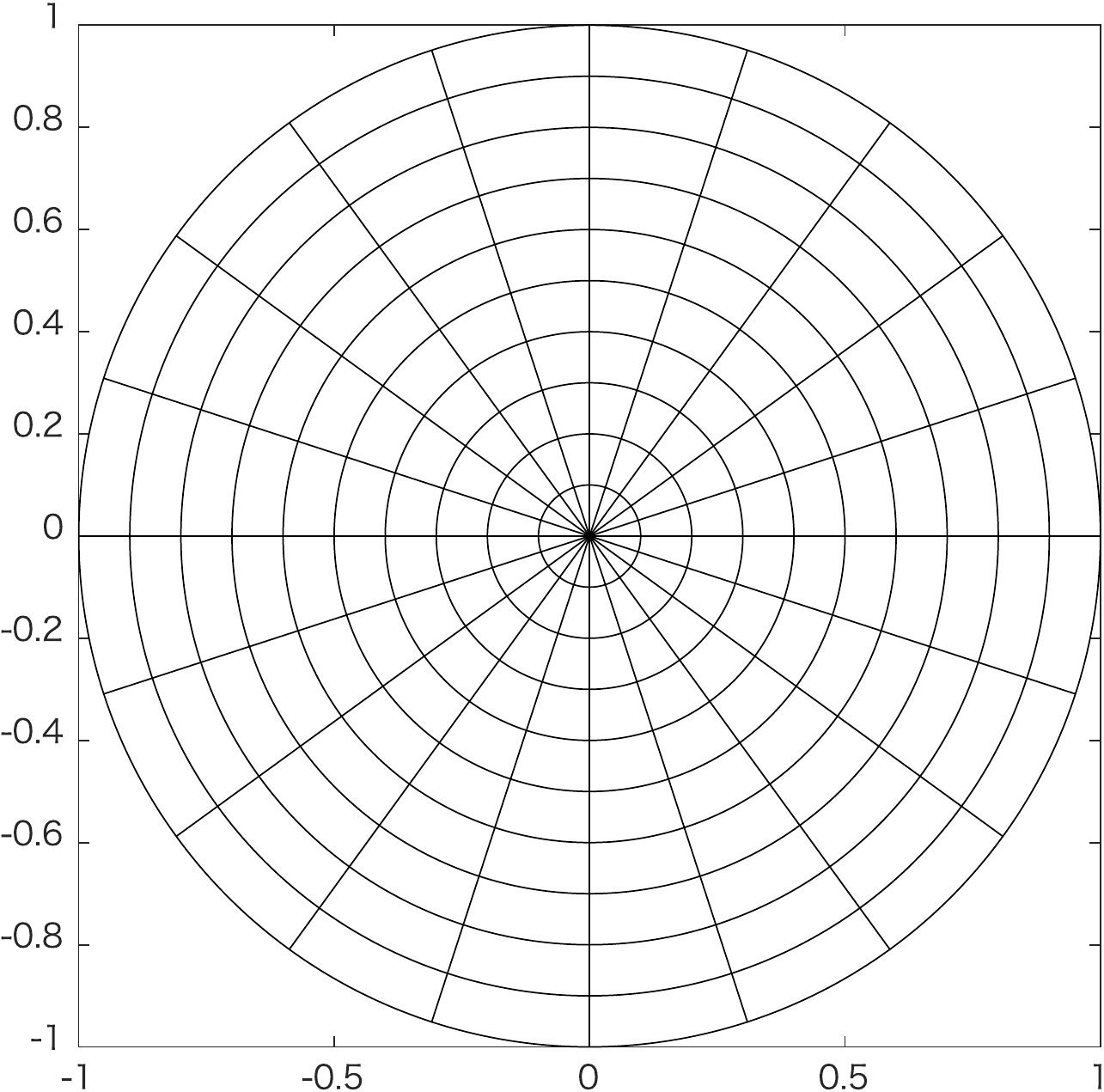}
	\end{minipage}%
	\begin{minipage}{.5\hsize}
		\includegraphics[width=\hsize]{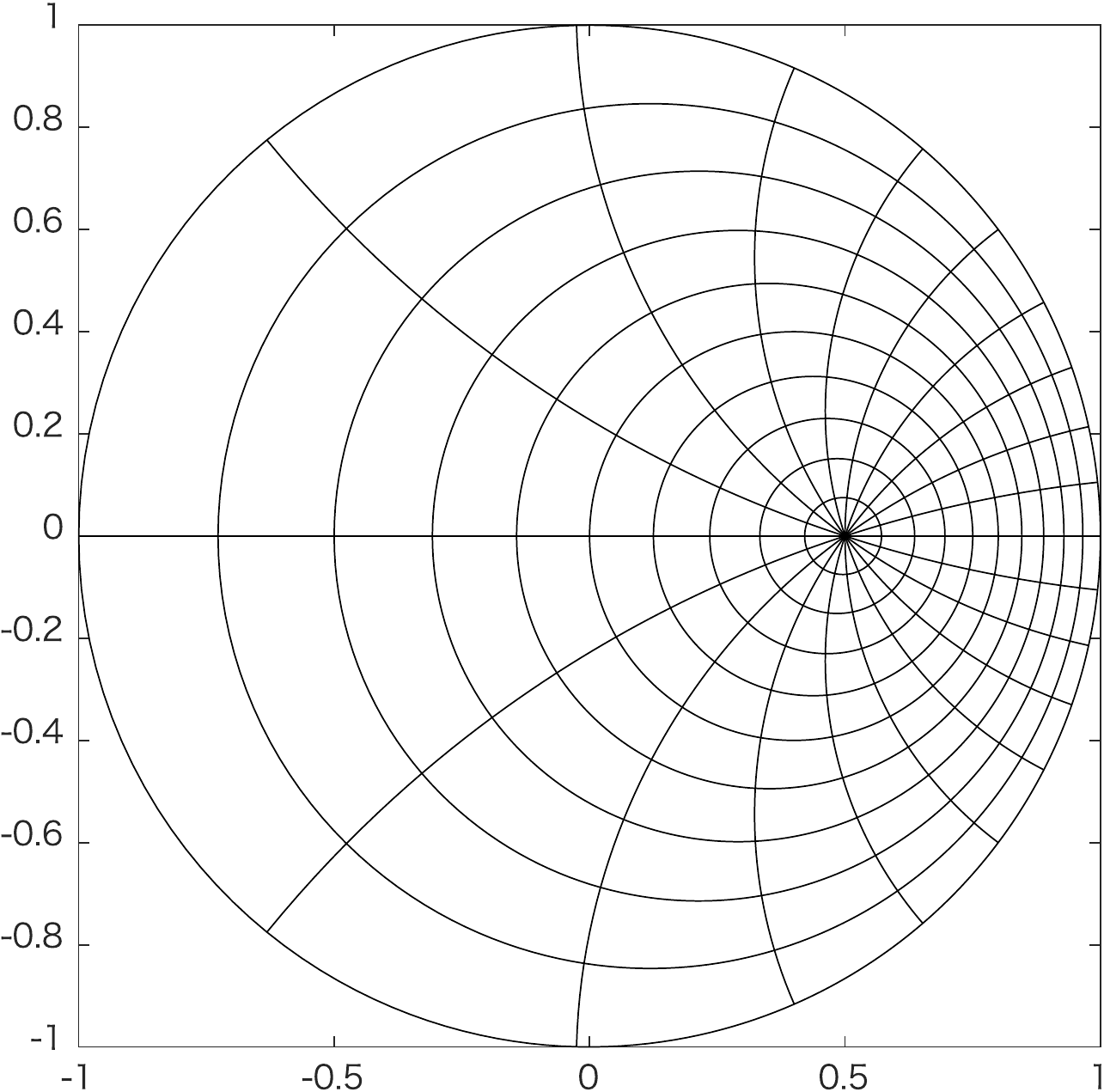}
	\end{minipage}%
	\caption{Numerical results for backward conformal mapping from the unit disk onto itself: (upper left) the arrangements of the singular points $\{\xi_k\}_{k=1}^{N}$, and the collocation points $\{w_j\}_{j=1}^{N}$; (upper right) $N$-$\log_{10}\|f_*-f_*^{(N)}\|_{L^\infty(D(0;1))}$; (lower left) preimage; (lower right) image by numerical backward conformal mapping, where $z_0=0.5$, and $r_{\mathrm{b}}=0.1$.}
	\label{fig:DiskToDiskBackward}
\end{figure}

\subsubsection{Cassini's oval--Disk}

The next example deals with the case where the problem region $\Omega$ is a Cassini's oval, which is defined as
\begin{align}
	\Omega
	=
	\mathcal{O}_a
	:=
	\{z\in\mathbb{C}\mid|z+1||z-1|<a^2\},
	\quad
	a>1.
\end{align}
The Cassini's oval changes its shape depending on the parameter $a$, and it becomes concave when $a\in(1,\sqrt{2}]$.
The exact forms of the forward conformal mapping $f$ and backward one $f_*$ are given by
\begin{equation}
	f(z)=\frac{az}{\sqrt{a^4-1+z^2}},
	\quad
	f_*(w)=\frac{\sqrt{a^4-1}w}{\sqrt{a^2-w^2}},
\end{equation}
where the branches of the square roots are the principal ones.
In this numerical experiment, we consider the case where $a=1.1$.
Numerical results for forward conformal mapping and the ones for backward numerical conformal mapping are shown in Figures \ref{fig:CassiniToDiskForward} and \ref{fig:CassiniToDiskBackward}, respectively.
The parameters are given as $z_0=0$, $r_{\mathrm{f}}=0.06$, and $r_{\mathrm{b}}=0.04$.
\begin{figure}[tb]
	\begin{minipage}{.535\hsize}
		\includegraphics[width=\hsize]{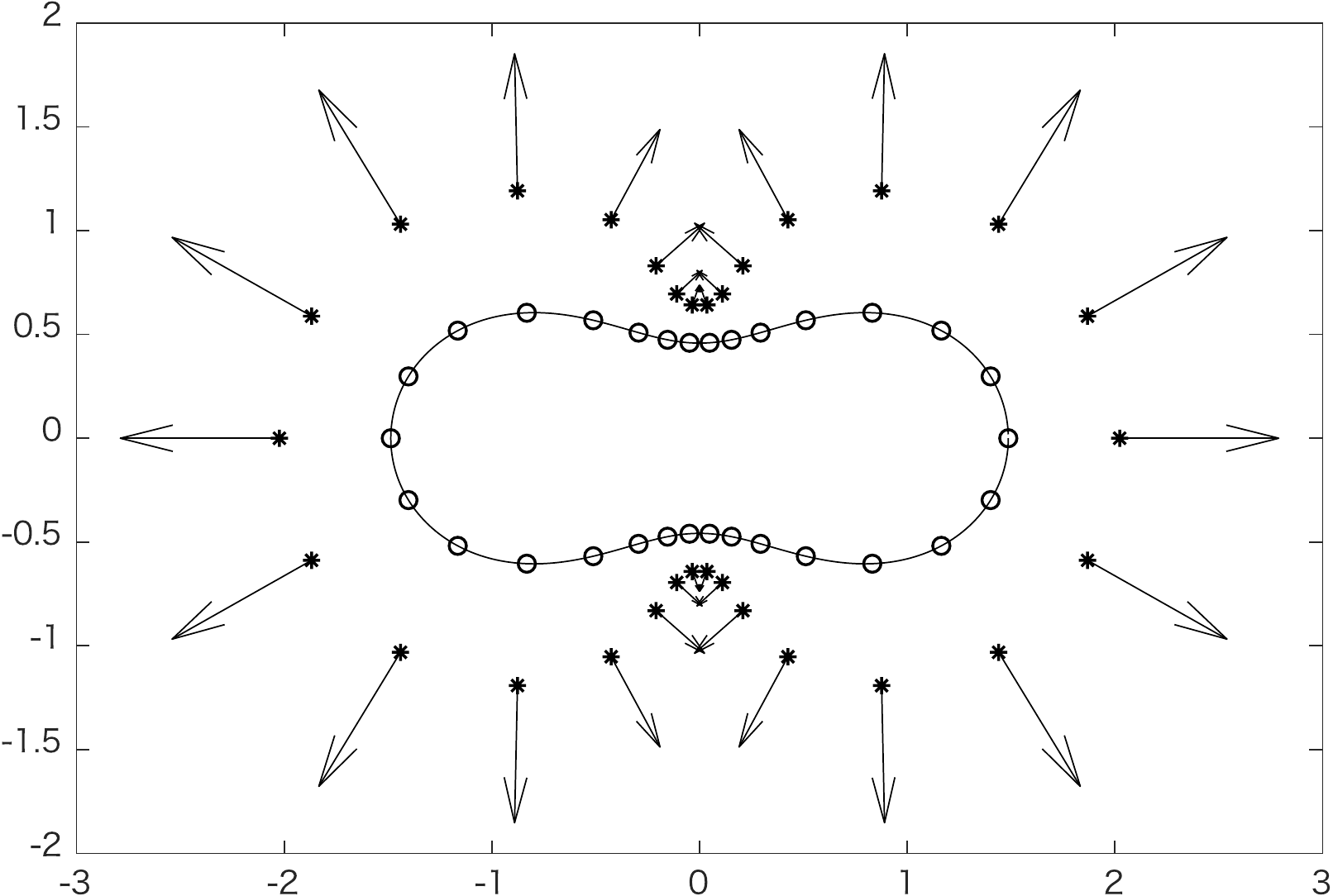}
	\end{minipage}%
	\begin{minipage}{.465\hsize}
		\includegraphics[width=\hsize]{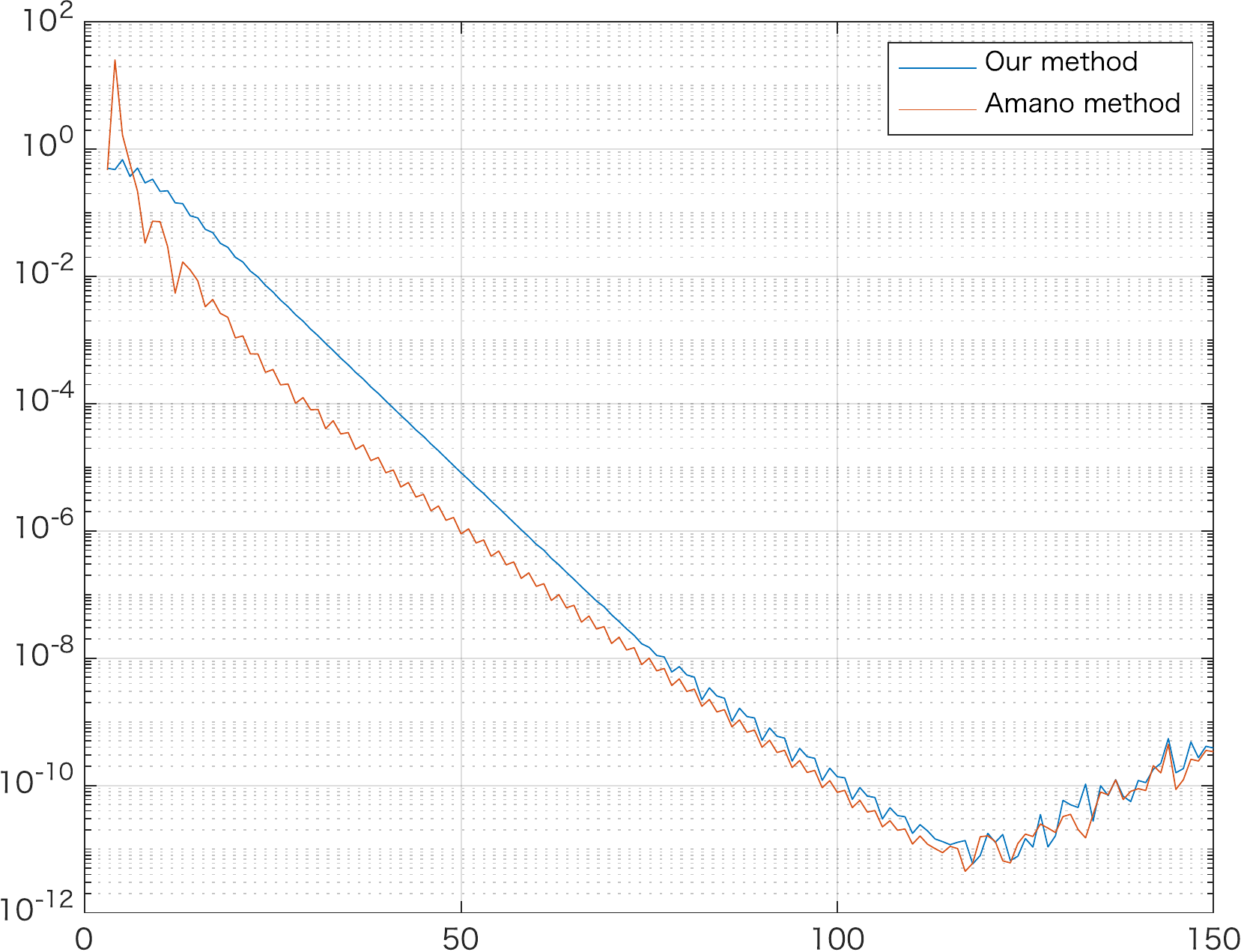}
	\end{minipage}%
	
	\begin{minipage}{.6\hsize}
		\includegraphics[width=\hsize]{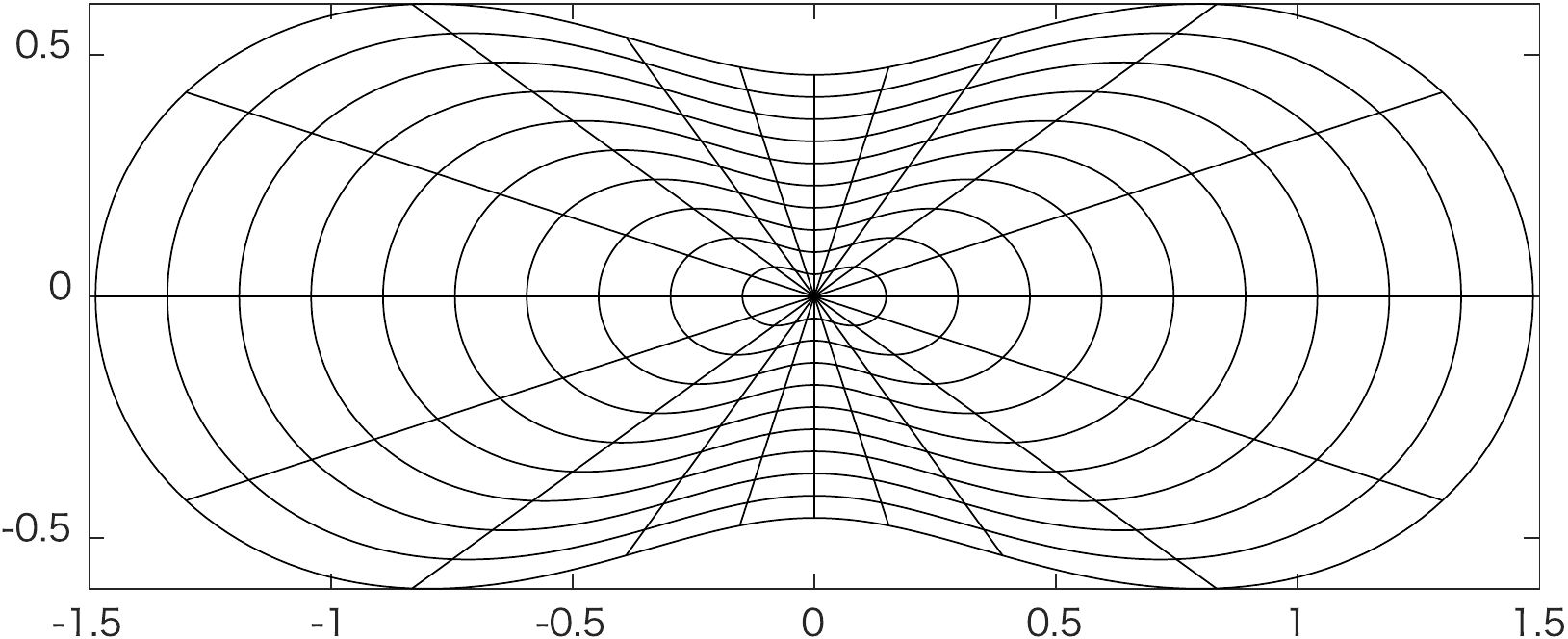}
	\end{minipage}%
	\begin{minipage}{.4\hsize}
		\includegraphics[width=\hsize]{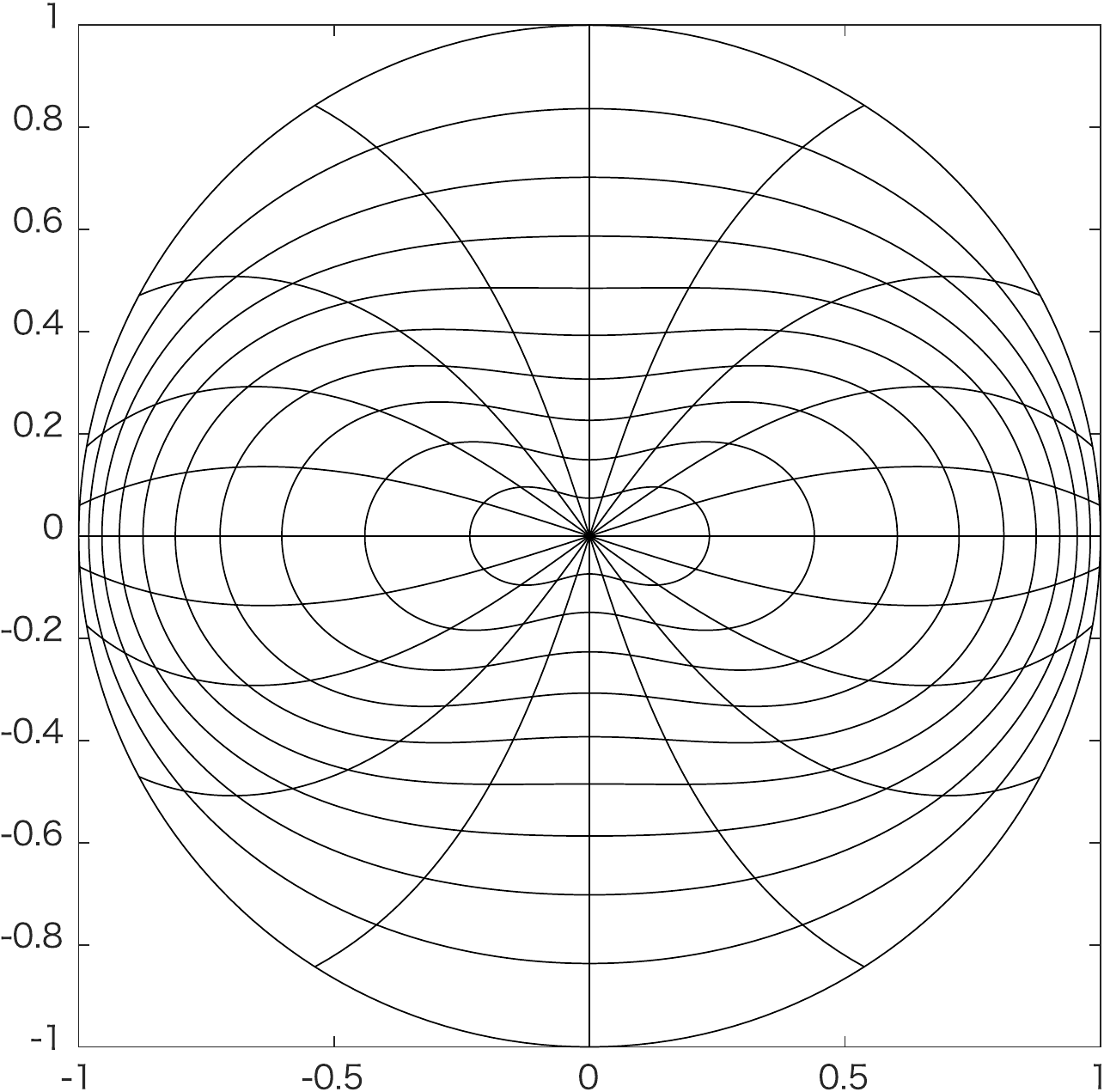}
	\end{minipage}%
	\caption{Numerical results for forward conformal mapping from the Cassini's oval onto the unit disk; (upper left) the arrangements of the singular points $\{\zeta_k\}_{k=1}^{N}$, the collocation points $\{z_j\}_{j=1}^{N}$, and the dipole moments $\{n_k\}_{k=1}^{N}$; (upper right) $N$-$\log_{10}\|f-f^{(N)}\|_{L^\infty(\Omega)}$ graph; (lower left) preimage; (lower right) image by numerical forward conformal mapping, where $a=1.1$, $z_0=0.5$, and $r_{\mathrm{f}}=0.06$.}
	\label{fig:CassiniToDiskForward}
\end{figure}
\begin{figure}[tb]
	\begin{minipage}{.445\hsize}
		\includegraphics[width=\hsize]{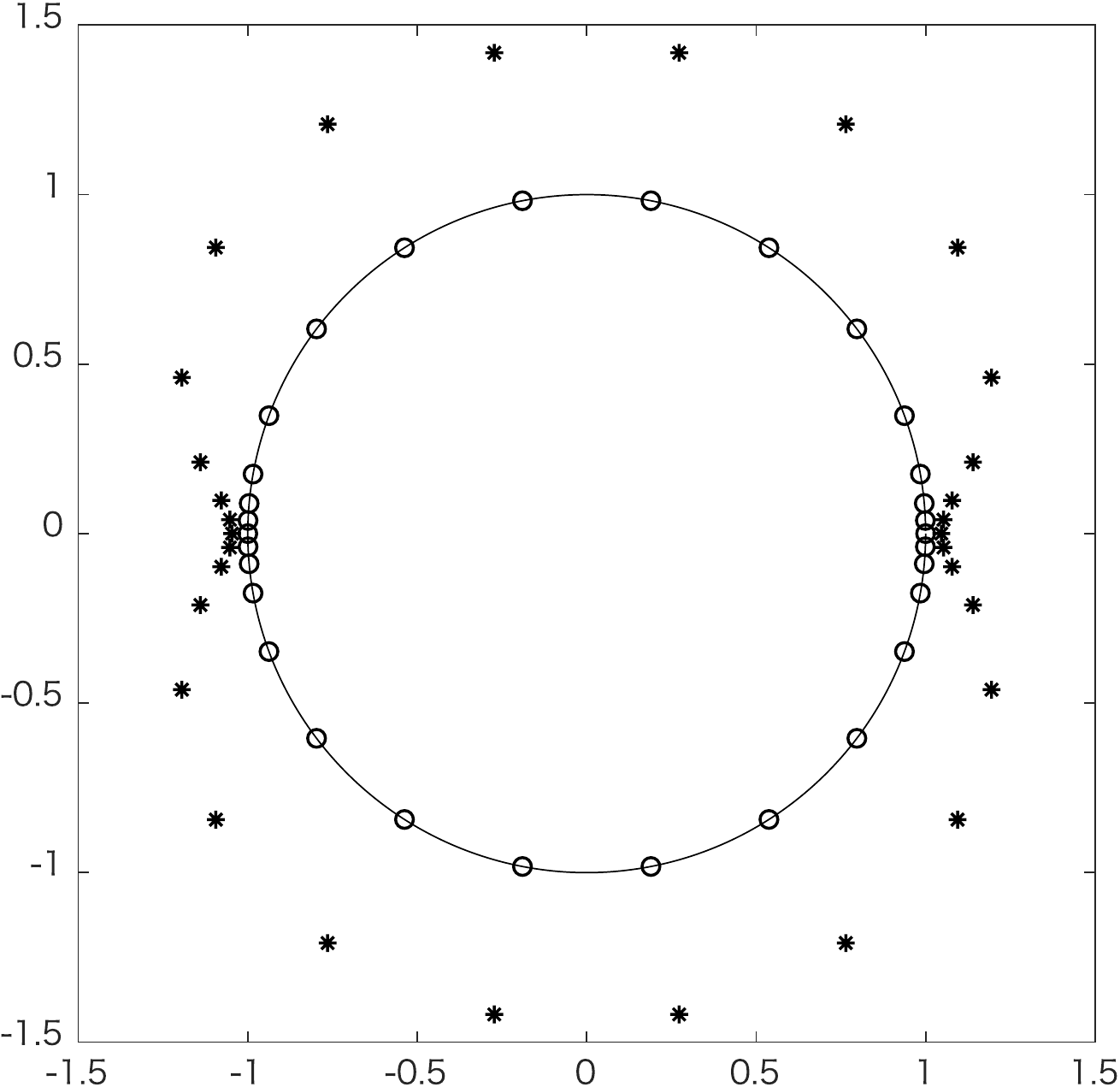}
	\end{minipage}%
	\begin{minipage}{.555\hsize}
		\includegraphics[width=\hsize]{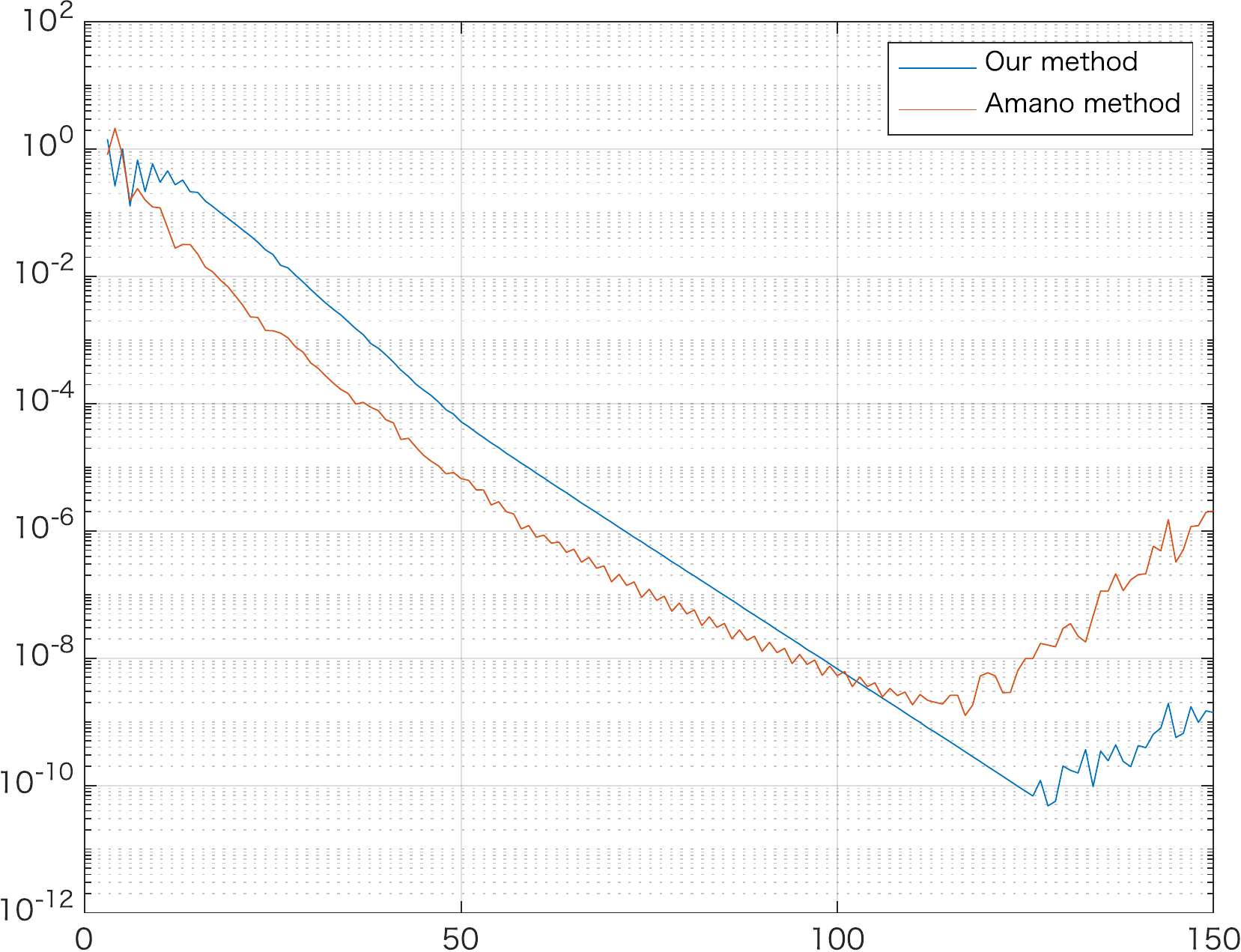}
	\end{minipage}%
	
	\begin{minipage}{.4\hsize}
		\includegraphics[width=\hsize]{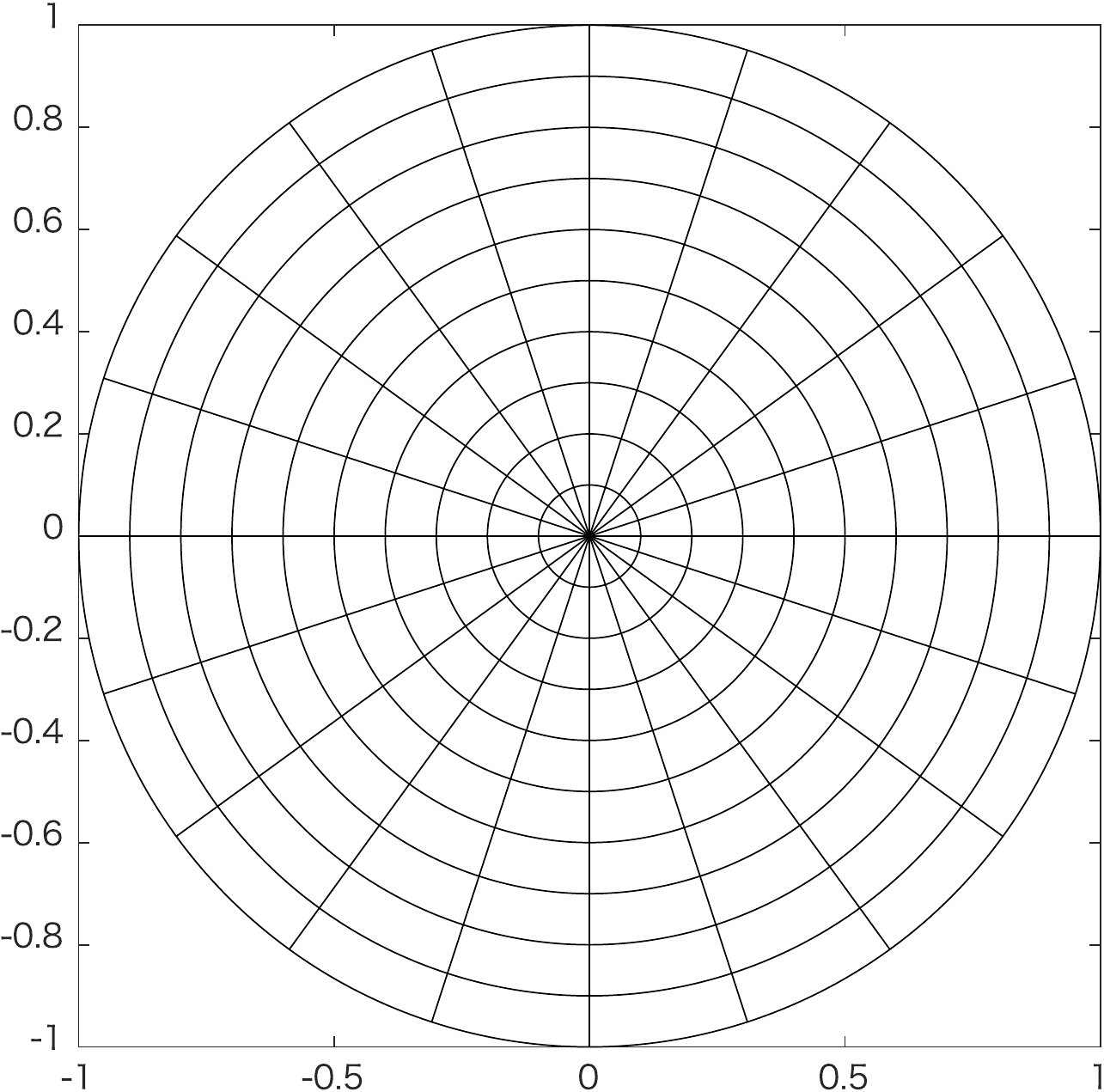}
	\end{minipage}%
	\begin{minipage}{.6\hsize}
		\includegraphics[width=\hsize]{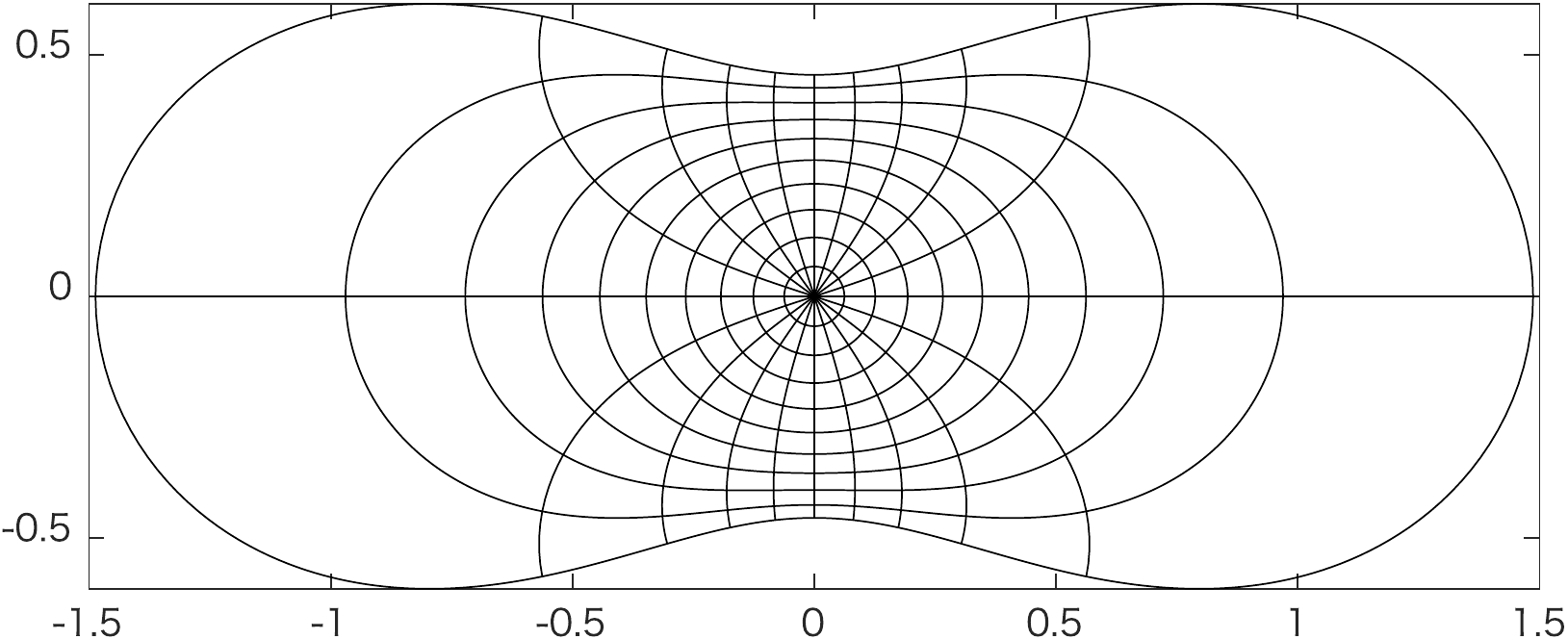}
	\end{minipage}%
	\caption{Numerical results for backward conformal mapping from the unit disk onto the Cassini's oval; (upper left) the arrangements of the singular points $\{\xi_k\}_{k=1}^{N}$, and the collocation points $\{w_j\}_{j=1}^{N}$; (upper right) $N$-$\log_{10}\|f_*-f_*^{(N)}\|_{L^\infty(D(0;1))}$; (lower left) preimage; (lower right) image by numerical backward conformal mapping, where $a=1.1$, $z_0=0$, and $r_{\mathrm{b}}=0.04$.}
	\label{fig:CassiniToDiskBackward}
\end{figure}

\subsection{Doubly-connected region}

We next consider the case where $\Omega$ is a multiply-connected region.
As an example, we deal with the following problem region $\Omega$, which we call the Cassini's frame:
\begin{equation}
	\Omega
	=
	\mathcal{F}_{a_1,b_1,a_2,b_2}
	:=
	\{z\in\mathbb{C}\mid|z^2-b_1^2|<a_1^2,\ |z^2-b_2^2|>a_2^2\}.
\end{equation}
When the condition $(a_1^4-b_1^4)/b_1^2=(a_2^4-b_2^4)/b_2^2$ holds, we can write down explicit formulae for the forward conformal mapping $f$, backward one $f_*$, and $\rho$ as follows:
\begin{equation}
	f(z)=\frac{a_1z}{\sqrt{b_1^2z^2+a_1^4-b_1^4}},
	\quad
	f_*(w)=\frac{\sqrt{a_1^4-b_1^4}w}{\sqrt{a_1^2-b_1^2w^2}},
	\quad
	\rho=\frac{a_1b_2}{a_2b_1}.
\end{equation}
Here, the square roots are interpreted to take principal values.
Numerical results are depicted in Figures \ref{fig:CassiniFrameToAnnulusForward} and \ref{fig:CassiniFrameToAnnulusBackward}.
\begin{figure}[tb]
	\begin{minipage}{.505\hsize}
		\includegraphics[width=\hsize]{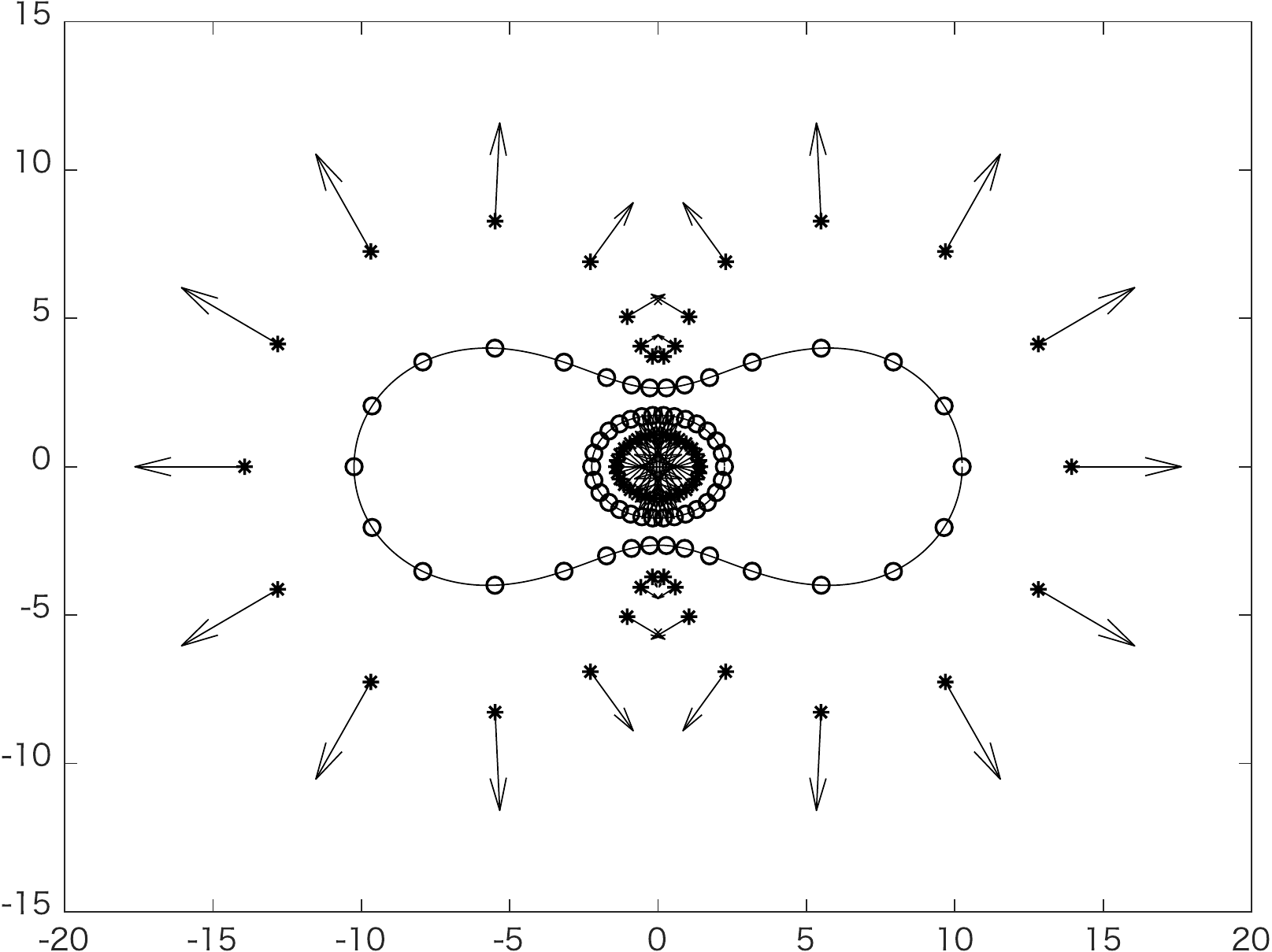}
	\end{minipage}%
	\begin{minipage}{.495\hsize}
		\includegraphics[width=\hsize]{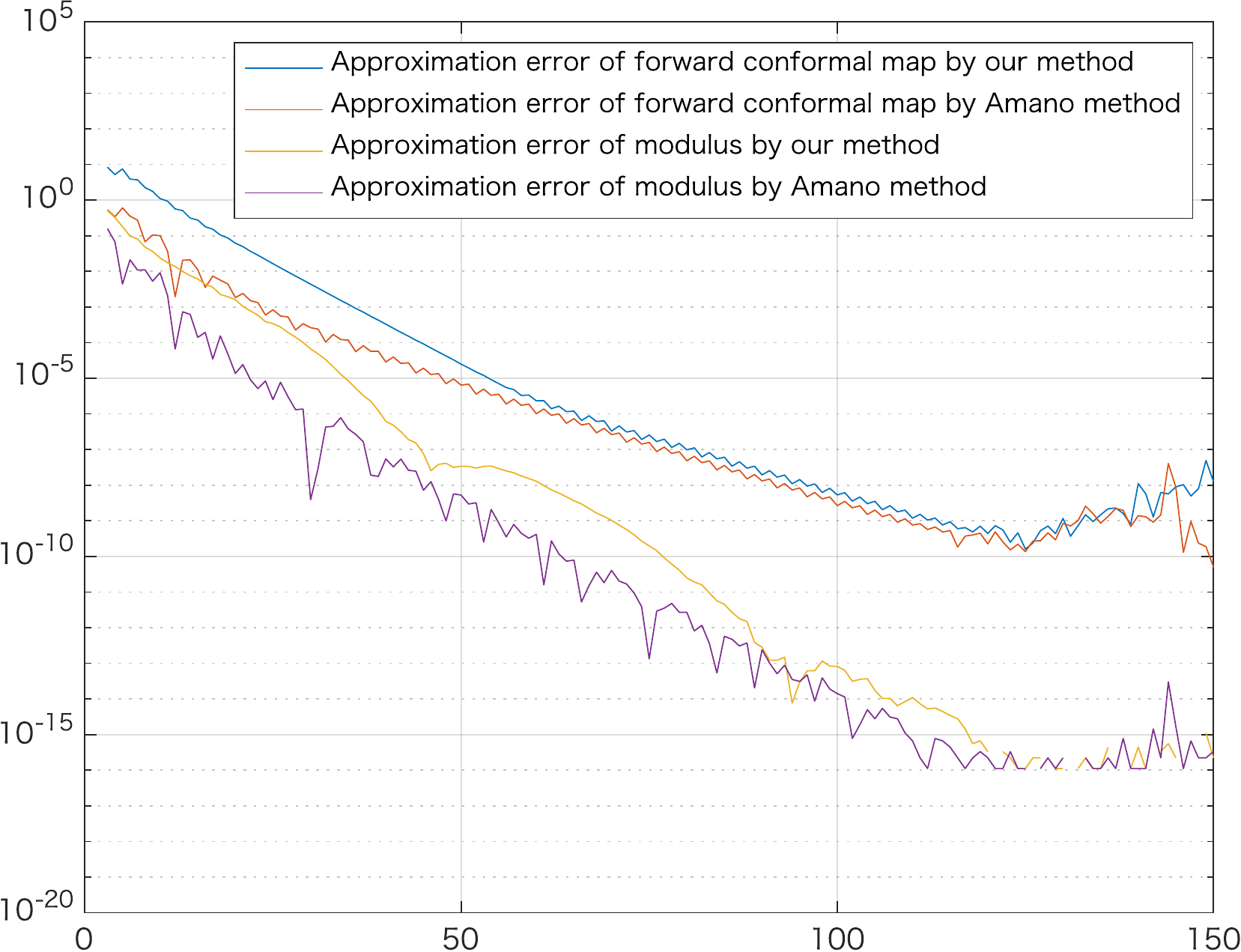}
	\end{minipage}%
	
	\begin{minipage}{.6\hsize}
		\includegraphics[width=\hsize]{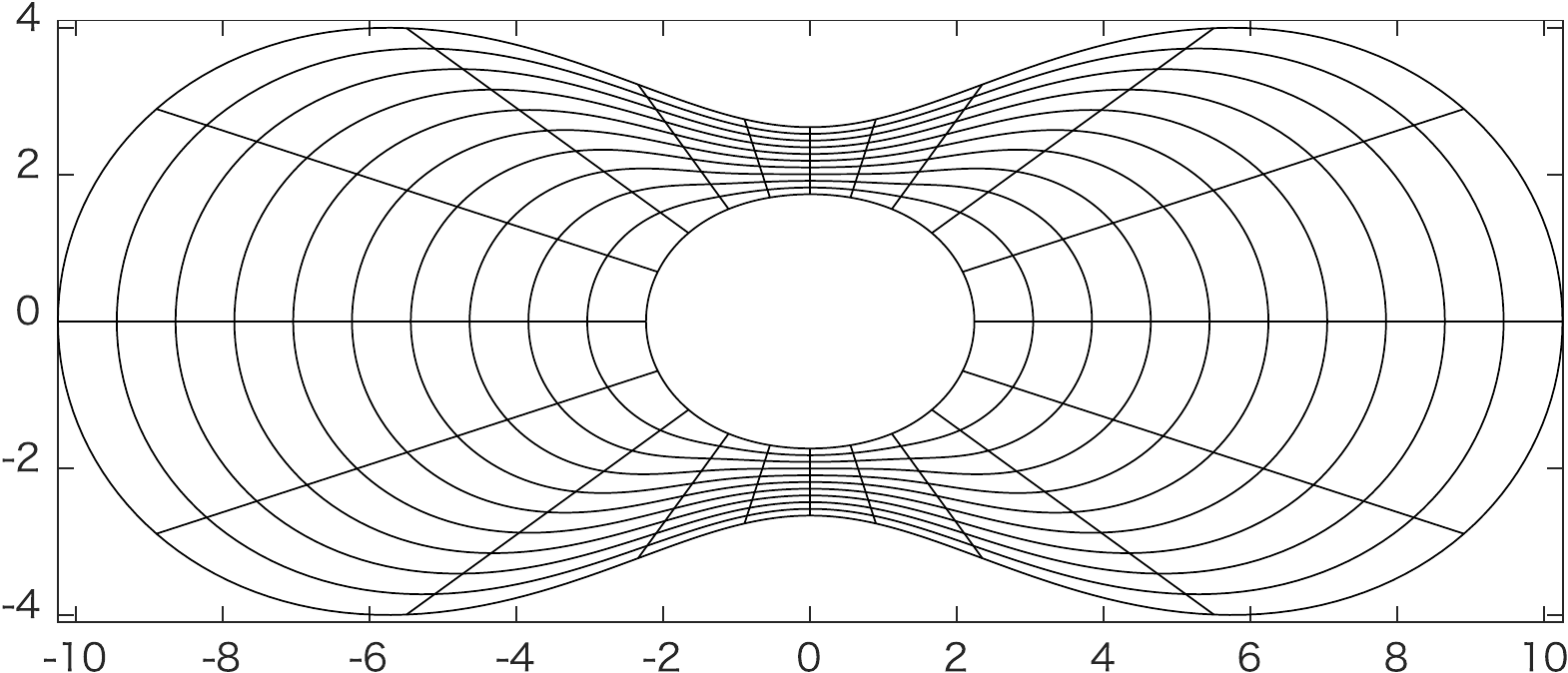}
	\end{minipage}%
	\begin{minipage}{.4\hsize}
		\includegraphics[width=\hsize]{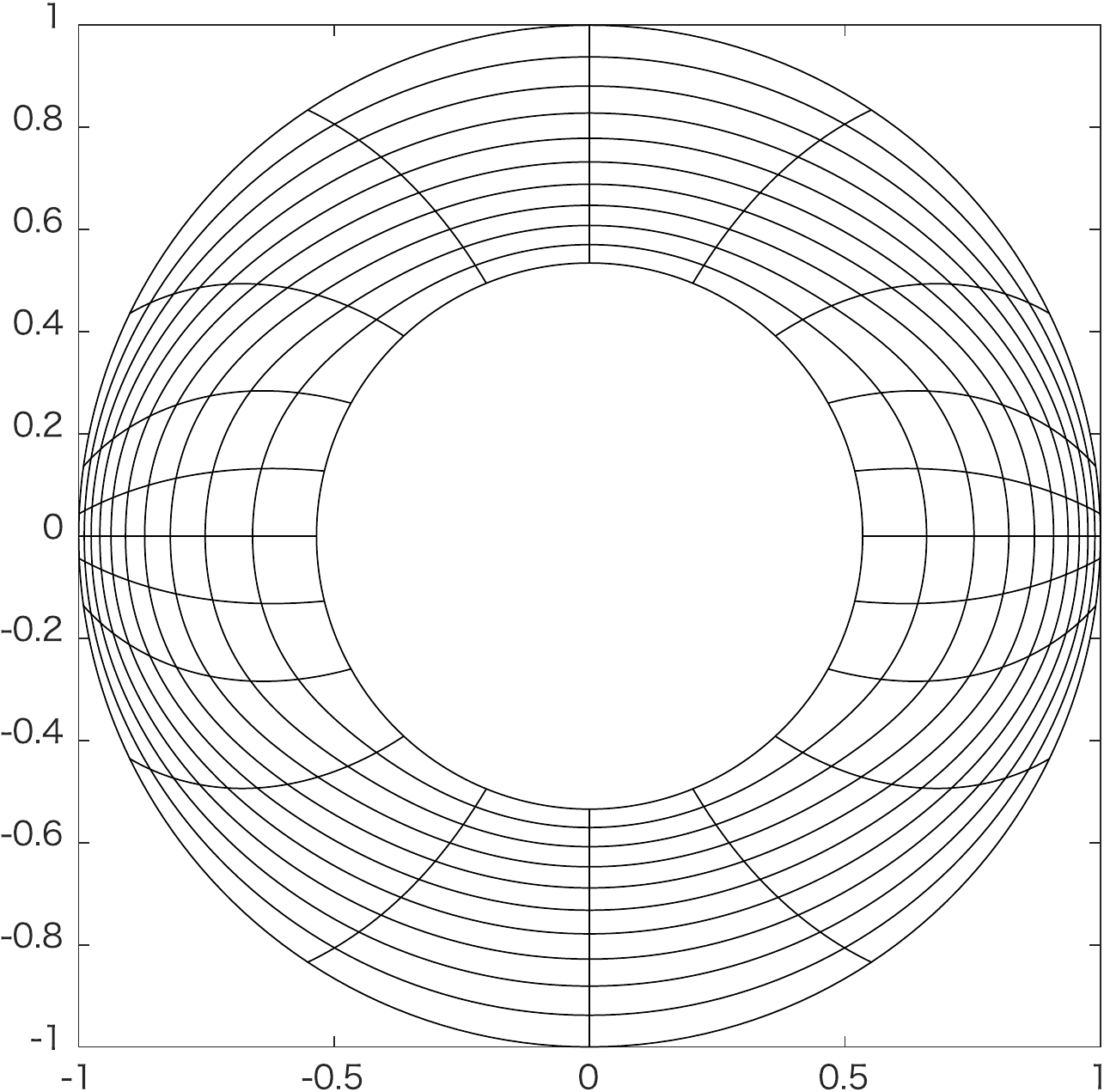}
	\end{minipage}%
	\caption{Numerical results for forward numerical conformal mapping from the Cassini's frame onto the annular region; (upper left) the arrangements of the singular points $\{\zeta_{\nu k}\}_{\nu=1,2}^{k=1,\ldots,N}$, the collocation points $\{z_{\mu j}\}_{\mu=1,2}^{j=1,\ldots,N}$, and the dipole moments $\{n_{\nu k}\}_{\nu=1,2}^{k=1,\ldots,N}$; (upper right) $N$-$\log_{10}\|f-f^{(N)}\|_{L^\infty(\Omega)}$ graph and $N$-$\log_{10}\|\rho-R^{(N)}\|_{L^\infty(\Omega)}$; (lower left) preimage; (lower right) image by numerical forward conformal mapping, where $a_1=2\sqrt{14}$, $b_1=7$, $a_2=2$, $b_2=1$, $z_0=0$, and $r_{\mathrm{f}}=0.06$.}
	\label{fig:CassiniFrameToAnnulusForward}
\end{figure}
\begin{figure}[tb]
	\begin{minipage}{.5\hsize}
		\includegraphics[width=\hsize]{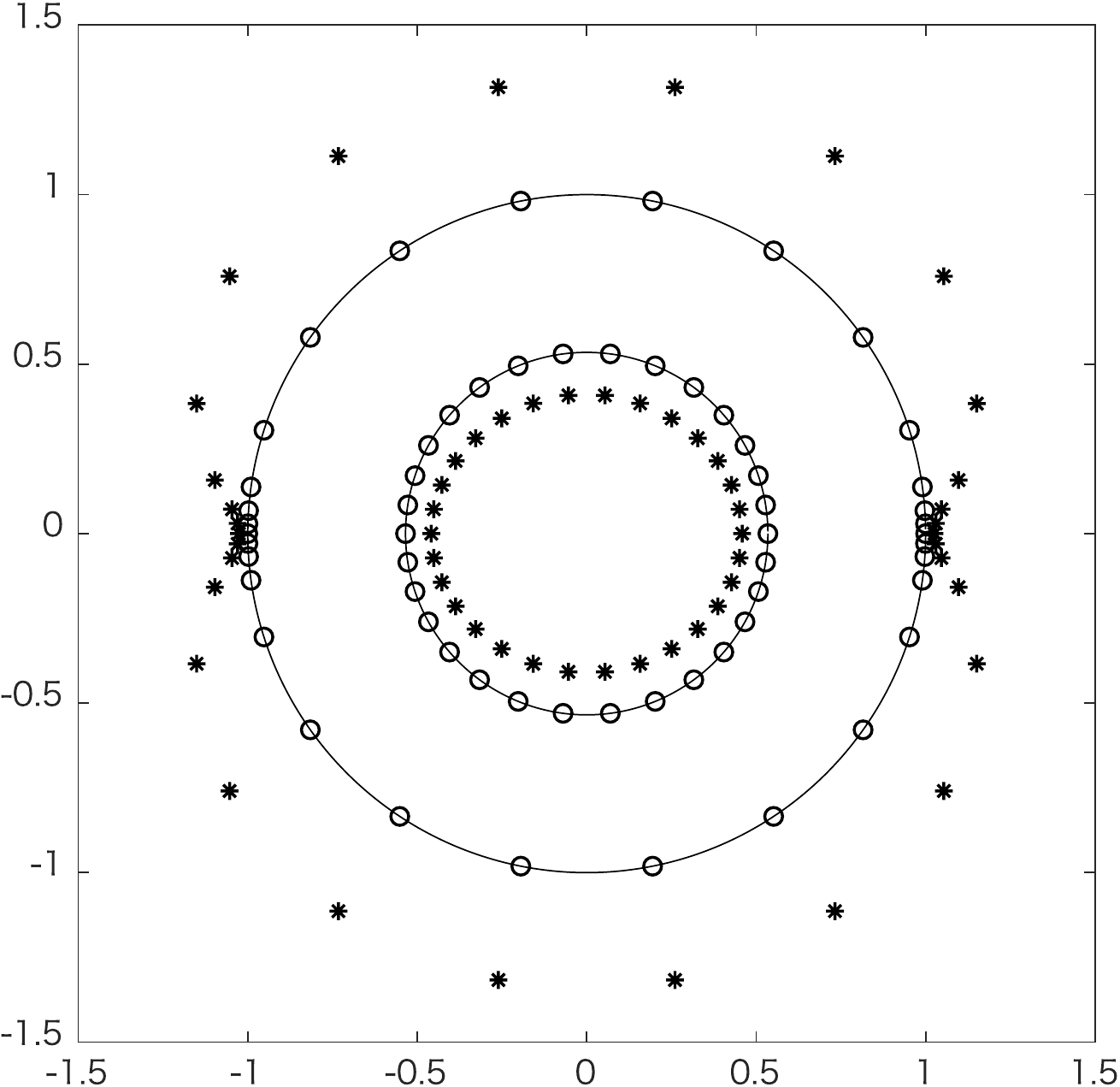}
	\end{minipage}%
	\begin{minipage}{.5\hsize}
		\includegraphics[width=\hsize]{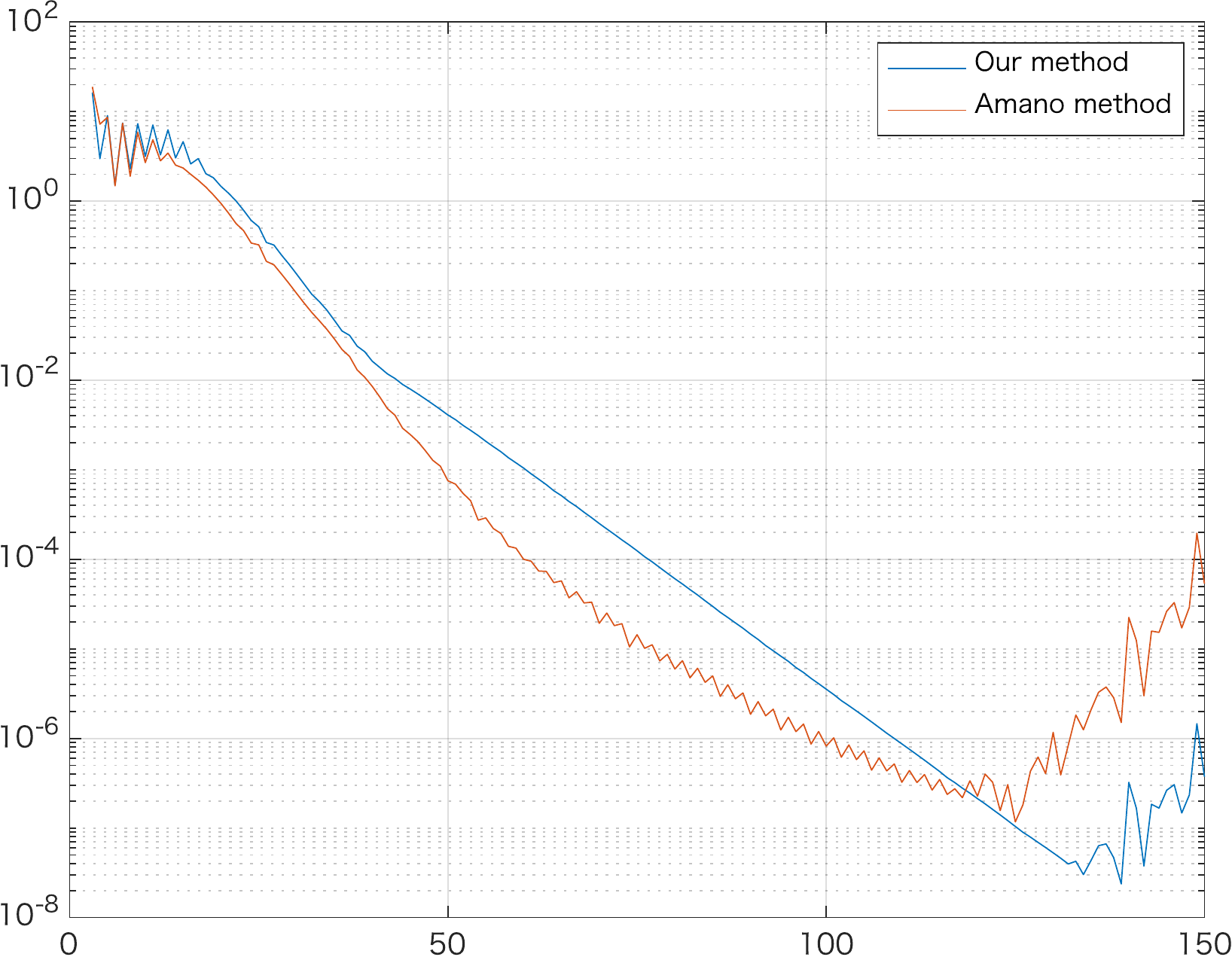}
	\end{minipage}%
	
	\begin{minipage}{.4\hsize}
		\includegraphics[width=\hsize]{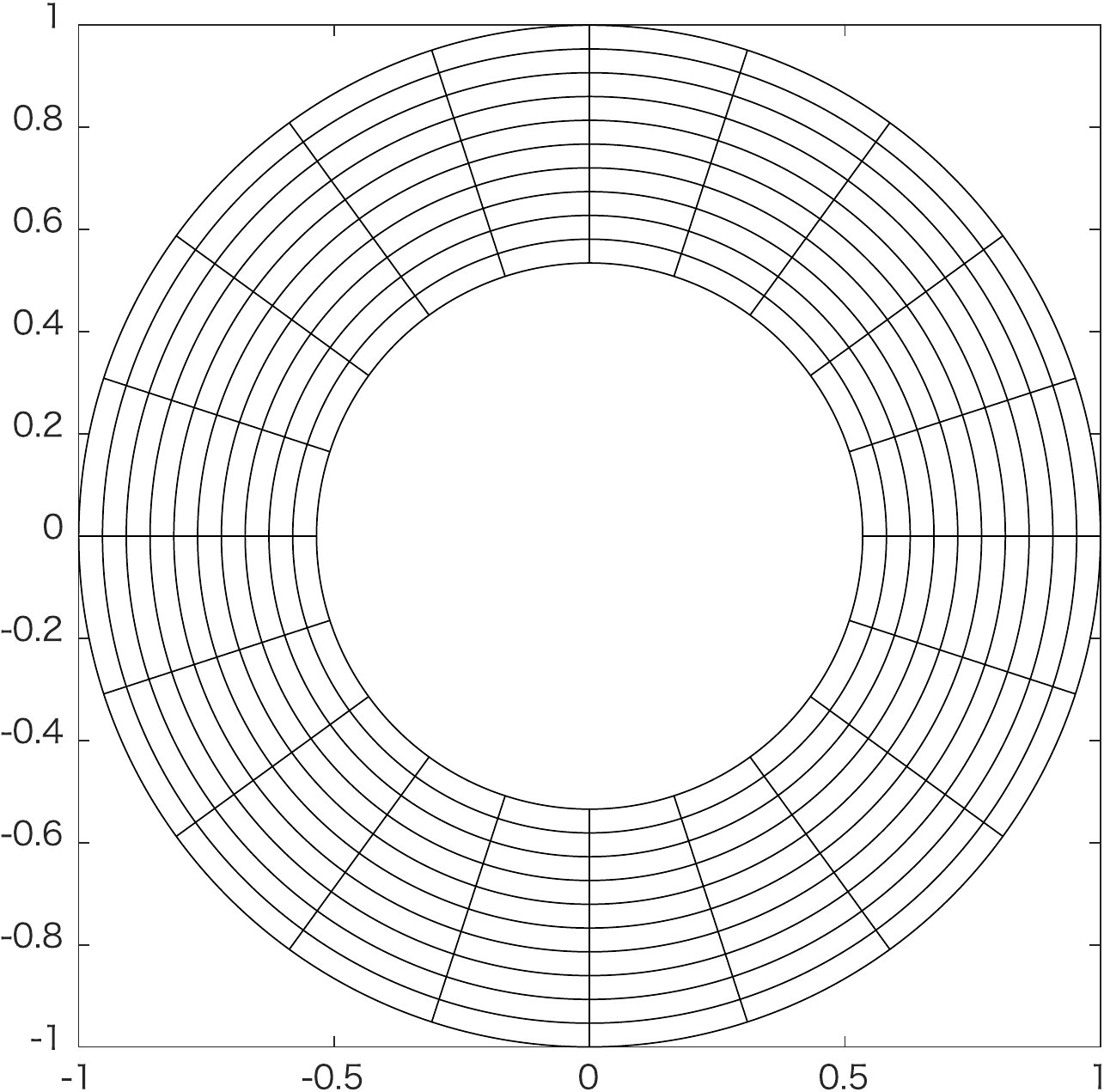}
	\end{minipage}%
	\begin{minipage}{.6\hsize}
		\includegraphics[width=\hsize]{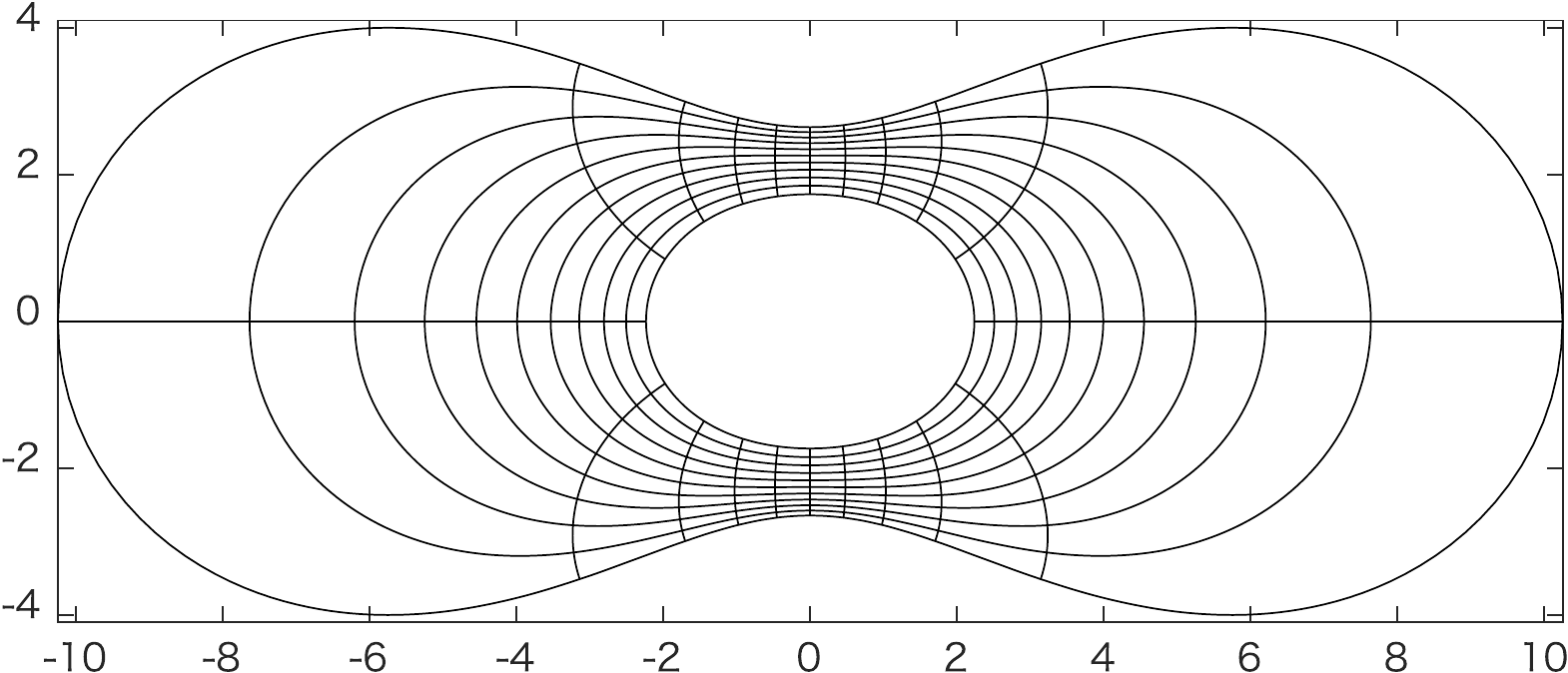}
	\end{minipage}%
	\caption{Numerical results for backward conformal mapping from the annular region onto the Cassini's frame; (upper left) the arrangements of the singular points $\{\xi_{\nu k}\}_{\nu=1,2}^{k=1,\ldots,N}$, and the collocation points $\{w_{\mu j}\}_{\mu=1,2}^{j=1,\ldots,N}$; (upper right) $N$-$\log_{10}\|f_*-f_*^{(N)}\|_{L^\infty(A(\rho,1))}$ graph; (lower left) preimage; (lower right) image by numerical backward conformal mapping, where $a_1=2\sqrt{14}$, $b_1=7$, $a_2=2$, $b_2=1$, $z_0=0$, and $r_{\mathrm{b}}=0.03$.}
	\label{fig:CassiniFrameToAnnulusBackward}
\end{figure}

In previous studies, several cases have been studied, and our method can be applied to all those situations (refer for instance to \cite{MR1310252,MR1614319,MR2931407} concerning other situations).

\section{Concluding remarks}
\label{sec:ConcludingRemarks}

In this paper, we developed a numerical scheme for bidirectional conformal mapping.
More precisely, the forward conformal mapping is computed by the dipole simulation method, and the backward one is by the complex dipole simulation method.
We established a theoretical error estimate for the approximation error of conjugate harmonic function and also gave simple mathematical observation on the arrangements of the singular points.
Moreover, several numerical results exemplify the effectiveness of our proposed method.

\subsection*{Acknowledgements}

This work is supported by JSPS KAKENHI No.~18K13455.

\bibliographystyle{plain}
\bibliography{reference}
\end{document}